\newcommand {\E} {{\mathbb E}}
\newcommand {\Proba} {{\mathbb P}}
\newcommand{\be}{\begin{equation}}
\newcommand{\ee}{\end{equation}}
\newcommand{\benn}{\begin{equation*}}
\newcommand{\eenn}{\end{equation*}}
\newcommand{\bea}{\begin{eqnarray}}
\newcommand{\eea}{\end{eqnarray}}
\newcommand{\beann}{\begin{eqnarray*}}
\newcommand{\eeann}{\end{eqnarray*}}
\def\e{\ensuremath{\varepsilon}}
\newtheorem{theorem}{Theorem}[section]
\newtheorem{proposition}[theorem]{Proposition}
\newtheorem{lemma}[theorem]{Lemma}
\newtheorem{assumption}[theorem]{Assumption}
\numberwithin{equation}{section}
\def\txtd{{\textnormal{d}}}
\def\txte{{\textnormal{e}}}
\def\cD{{\mathcal{D}}}
\title{Travelling waves for discrete stochastic bistable equations}
\author{Carina Geldhauser}
\address{University of Sheffield, School of Mathematics and Statistics, Hicks Building, Hounsfield Road, Sheffield S3 7RH, United Kingdom.}
\email{c.geldhauser@sheffield.ac.uk}
\author{Christian Kuehn}
\address{Technical University of Munich, Department of Mathematics, Boltzmannstr.~3, 85748 Garching bei M\"unchen, Germany.}
\email{ckuehn@ma.tum.de}
\begin{document}

\begin{abstract}
Many physical, chemical and biological systems have an inherent discrete spatial 
structure that strongly influences their dynamical behaviour. Similar remarks apply to 
internal or external noise, as well as to nonlocal coupling. In this paper we study the 
combined effect of nonlocal spatial discretization and stochastic perturbations on 
travelling waves in the Nagumo equation, which is a prototypical model for bistable 
reaction-diffusion partial differential equations (PDEs). We prove that under suitable parameter 
conditions, various discrete-stochastic variants of the Nagumo equation have solutions, which 
stay close on long time scales to the classical monotone Nagumo front with high probability
if the noise level and spatial discretization are sufficiently small. 
\end{abstract}

\maketitle

\textbf{Keywords:} Nagumo equation, bistability, stochastic partial differential equation, 
lattice differential equation, travelling wave, noise, discretization, 
Allen-Cahn equation, Ginzburg-Landau equation, $\Phi^4$ model, Schl\"ogl equation.


\section{Introduction}
\label{sec:intro}

The Nagumo~\cite{Nagumo} partial differential equation (PDE) for $V=V(t,x)\in\mathbb{R}$ 
is given by 
\be
\label{eq:nagumo} \tag{{Nag$\mathbb{R}$}}
 \partial_t V \;  = \;  \nu \partial_x^2 V \;  + \; f(V), \qquad (t,x) \in [0,\infty) \times \mathbb{R},
\ee
where $f(V) = f(V;a) = V (1-V)(V-a)$, where $a\in(0,1/2)$ and $b,\nu>0$ are parameters.
The PDE~\eqref{eq:nagumo} 
is a prototypical model of bistability in the sense that $V\equiv 0$ and $V\equiv 1$ are locally asymptotically 
steady states, while $V\equiv a$ is unstable. For any $a \in (0,1/2)$ there exist 
travelling front solutions 
\[
V(t,x) = V^{{\textnormal{TW}}}(x - ct)=V^{{\textnormal{TW}}}(\zeta),\quad \zeta:=x-ct  
\]
connecting the two locally stable states, i.e., 
$V^{{\textnormal{TW}}}(-\infty) = 0$ and $V^{{\textnormal{TW}}}(\infty) = 1$. The front
is spatially monotone $(V^{{\textnormal{TW}}})'(\zeta)>0$, left-moving with a unique wave speed 
satisfying $c=c(a)<0$, unique up to translation, and (locally) nonlinearly stable~\cite{Evans,KuehnBook1}.
Extensions to the standing wave for $a=1/2$, and to right-moving waves for $a\in(1/2,1)$ 
are easily obtained from symmetry arguments~\cite{Evans,KuehnBook1}.
 
The Nagumo equation plays an important role in neuroscience~\cite{ErmentroutTerman} as the 
simplest toy model of signal propagation through axons. It is very actively studied also 
outside neuroscience applications, e.g., as an amplitude equation~\cite{CrossHohenberg}, in population 
dynamics modelling~\cite{CantrellCosner}, and in material science~\cite{AllenCahn}. 
In fact, the PDE~\eqref{eq:nagumo} is also referred to as the Allen-Cahn equation in 
materials science, as the real Ginzburg-Landau equation in normal form theory, as the 
$\Phi^4$-model in quantum field theory and as the Schl\"ogl model in chemistry. When 
modelling signal propagation in neurons, several effects are not taken into account 
in~\eqref{eq:nagumo}: 

\begin{itemize}
 \item[(I)] The electric signal travelling through a myelinated nerve fiber do not move 
continuously. The signal jumps from one gap in the myeline coating of the nerve fiber to the 
next~\cite{Keener}. This suggests the use of a \emph{spatially discrete} setting.
 \item[(II)] The propagation of the electric signal along the axon is influenced by many 
internal and/or external biophysical processes. Since modelling every process microscopically is
usually impossible, this leads naturally to a \emph{stochastic} version of the Nagumo equation.
 \item[(III)] The precise coupling distance of diffusion between myeline coating gaps 
is not easy to measure. This implies we should also allow for some form of \emph{nonlocal} coupling.  
 \item[(IV)] The axon does not have infinite length. Hence, one should consider \emph{bounded 
domains} instead. Furthermore, propagation takes place on a \emph{finite time scale}.
 \item[(V)] The propagation of fronts is an idealization of the electrical signal as usually
we would expect localized pulses. This requires \emph{systems} of reaction-diffusion equations.
\end{itemize}

Here we shall not cover the case (V), which is usually modelled using the 
Hodgkin-Huxley~\cite{HodgkinHuxley4} or FitzHugh-Nagumo~\cite{FitzHugh} PDEs but 
see~\cite{EichingerGnannKuehn}. However,
all the arguments we present can be carried over, in principle, to these cases. Instead,
we focus on a model to cover the \emph{combined} effects (I)-(IV). In fact, each of
the individual aspects (I)-(IV) have received some attention recently. We briefly 
review some background and introduce the relevant PDEs.\medskip   

The space-discrete setting will be modeled via a lattice differential equation (LDE), 
whose solution at node $i$, called $V_i=V_i(t)$, represents the potential at the $i$-th 
myeline gap. The discrete Nagumo equation with nonlocal diffusive coupling, reads at 
each node $i$ for some fixed coupling range $R\in \mathbb{N}$ as follows
\be
\label{eq:discnagumo}
\partial_t V_i = 
\;\frac{\nu}{R h^2}  \left(\sum_{j=-R}^R J(j) (V_j -  V_i) \right)+ \; f(V_i),\qquad i\in \mathbb{Z},  
\ee
where $h$ is a parameter controlling the discretization and $J(j) \in \mathbb{R}$
are weights. The classical case of local diffusive coupling is given by 
\be
\label{eq:discnagumolocal}
\partial_t V_i = 
\;\frac{\nu}{h^2}  \left(V_{i+1}-2V_i+V_{i-1}\right)+ \; f(V_i),\qquad i\in \mathbb{Z},  
\ee
The equation \eqref{eq:discnagumolocal} is the nearest-neighbor discretization 
of~\eqref{eq:nagumo}. The Nagumo LDE can be interpreted as being posed on an infinite 
lattice $\mathbb{Z}$ with lattice spacing $h$ so that $V_i$ corresponds to $V(ih)$. We 
write 
\begin{equation}
V^h:=(\ldots,V_{-2},V_{-1},V_0,V_1,V_2,\ldots)
\end{equation}
to emphasize that $V^h$ solves the discrete Nagumo equation.
The LDE~\eqref{eq:discnagumolocal} also admits travelling wave solutions for sufficiently 
strong diffusion strength $\nu$, i.e., for sufficiently large coupling; for small 
coupling, propagation failure may occur~\cite{Keener,HupkesSandstede2,Mallet-Paret}. 
More generally, the type of the discrete model may have substantial impact on the 
existence and uniqueness of travelling waves of the Nagumo and FitzHugh-Nagumo 
PDEs~\cite{ElmerVanVleck1,ElmerVanVleck2,HupkesvanVleck} as well as on the numerical 
analysis of discretization schemes for travelling waves~\cite{GriffithsStuartYee}. 

Notice that in~\eqref{eq:discnagumo} the general difference 
stencil involves $2R$ nodes, and $R$ may diverge with $N$, 
so it can be viewed as nonlocal. In fact, nonlocal variants
of the Nagumo equation have been studied in the LDE/PDE setting 
in several analytical and numerical works; see 
e.g.~\cite{AchleitnerKuehn2,BatesChenChmaj,BatesFifeRenWang,Chen1,ElmerVanVleck1}
and references therein. A similar difference stencil as used here was studied 
in~\cite{BatesChenChmaj,HupkesSchouten}, where existence of 
travelling wave solutions was proven for unbalanced nonlinearities and under certain 
conditions on the weights. 

Another important variation of the Nagumo equation is the stochastic PDE (SPDE)
version for $U=U(t,x)$ given by
\begin{equation}
\label{eq:nagumoSPDE}\tag{{SNag$\mathbb{R}$}}
 \partial_t U  = \nu \partial_{x}^2 U + f(U)+g(U)\xi,\qquad (t,x) \in [0,\infty) \times \mathbb{R}
\end{equation}
where $\xi=\xi(t,x)=\partial_t W(t,x)$ is a space-time dependent stochastic process, $W$ is a 
trace class Wiener process, and $g$ arises as a suitable mapping from modelling considerations, 
see \eqref{eq:sigmagrowth}-\eqref{eq:gzero}.

Although there is a detailed existence theory for many SPDEs~\cite{DaPratoZabczyk,Chow,LiuRoeckner1} 
going back to at least the late 1970s, and good physical understanding of many noisy pattern 
phenomena going back at least to the 1990s~\cite{GarciaOjalvoSancho}, the rigorous mathematical study 
of noisy (Nagumo) waves has just started to develop recently; see 
e.g.~\cite{HamsterHupkes,InglisMacLaurin,Lang,Stannat}. These studies have been driven 
by numerical observations~\cite{LordThuemmler,Shardlow,Tuckwell1} revealing that 
travelling wave solutions may persist under stochastic forcing, but their speed and 
form may change with varying noise strength. Of course, these results are also connected 
to recent advances in the numerical analysis of classical numerical schemes for the Nagumo
SPDE~\cite{SauerStannat1}; see also e.g.~\cite{Gyongy,Jentzen,LordRougement}. Furthermore, we 
refer to a recent survey of stochastic travelling wave problems for scalar reaction-diffusion 
equations SPDE for additional detailed background references~\cite{KuehnSPDEwaves}.\medskip

In this paper we are interested in the combined influence of (I)-(IV) on the 
finite-time evolution of travelling fronts. In this context, the key object is 
we are going to study is the stochastic LDE (SLDE) 
 \begin{equation}
\label{eq:discstochnagumointro}\tag{{dSNag$N$}}
   \txtd u_i = \left(\frac{\nu}{R h^2} \left(\sum_{j=-R}^R J(j) (u_j -  u_i) 
	\right)+f(u_i)\right)~\txtd t + g_N(u_i)~\txtd B_i,
 \end{equation}
with $i\in\{1,2,\ldots,N\}$, $u_i=u_i(t)$ stochastic processes on the lattice points $i$, 
independent identically distributed (iid) Brownian Motions $B_i(t)$, a suitable matrix-valued map  
$g_N$ obtained as a projection of $g$, and $R \leq N $ with $R\in\mathbb{N}$. In addition to 
viewing the solution 
as a vector 
$u^h:=(u_1,u_2,\ldots,u_N)$
we may interpret the solution $u^h$, say via piecewise 
linear interpolation, as a function on the interval $\mathcal{D}:=[-L,L]$ with $u_1$ and 
$u_N$ corresponding to the values at the left and right endpoints. Despite its
evident importance for applications, particularly in the context of neuroscience, 
there seems to be no study available regarding the \emph{dynamics} 
of~\eqref{eq:discstochnagumointro} although some first study without reference
to dynamics is~\cite{BovierGeldhauser}. One potential reason could be that 
physical intuition would lead us to believe that the effects coming from (I)-(IV) 
are somehow ``small'' so that we can neglect them, at least in certain parameter regimes. 
To make this intuition 
mathematically precise is a key contribution of our study. For parameters for which 
travelling waves to the deterministic PDE are known to exist, we prove in the 
stochastic setting that for sufficiently small $h$ and sufficiently small noise
that the solution to the Nagumo SLDE~\eqref{eq:discstochnagumointro} 
is close to a phase-adapted travelling front solution of the Nagumo 
PDE~\eqref{eq:nagumo} over finite time scales. Our main result can be  
stated as follows:

\begin{theorem}
\label{thm:intro}
Let $V^{\textnormal{TW}}=V^{\textnormal{TW}}(t,x)$ be a travelling front solution 
to~\eqref{eq:nagumo}, $u^h_0$ be deterministic lattice initial data and $u^h$ a 
solution to ~\eqref{eq:discstochnagumointro} on the interval $\mathcal{D}:=[-L,L]$ to 
the initial data $u^h_0 \equiv u^h(0)$. Suppose $L >0$ is large enough, while 
$\delta>0$, $T>0$, and $\tilde{\varepsilon}>0$ are given. Suppose the initial data 
$u^h(0)$ satisfies
\be
\| u^h(0) - V^{{\textnormal{TW}}}(0,\cdot)\|^2_{L^2(\mathbb{R})} < \varepsilon .
\ee
Then there exists $\varepsilon>0$ and $c\in\mathbb{R}$ such that, for sufficiently 
small noise and sufficiently small $h$, we have for the 
solution $u^h=u^h(t)$ of~\eqref{eq:discstochnagumointro} the estimate
\be
 \Proba \left[ \sup_{t \in [0,T]} \| u^h(t) - V^{{\textnormal{TW}}}(\cdot-ct) 
\|_{L^2(\mathbb{R})} > \delta \right] \; \leq \;  \tilde{\varepsilon}.
\ee
\end{theorem}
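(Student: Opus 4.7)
The plan is to combine three classical ingredients: phase adjustment that factors out the translation invariance of the wave, an It\^o/energy estimate on the $L^2$-distance between the stochastic lattice solution and a phase-shifted wave, and standard martingale inequalities that turn a mean-square bound into a probabilistic $\sup$-estimate. First I would introduce the phase-shifted wave $\vshift(t,x) := V^{\textnormal{TW}}(x - ct - \gamma(t))$, where $\gamma=\gamma(t)$ is determined implicitly by requiring that the residual $\ushift(t) := u^h(t) - \vshift(t)$ (with $u^h$ interpreted as its piecewise-linear interpolant on $\mathcal{D}$, extended by $0$ and $1$ outside) is orthogonal to a fixed direction, typically an adjoint eigenfunction $\psi$ associated with the zero eigenvalue $(V^{\textnormal{TW}})'$ of the continuous linearization. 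A short implicit-function argument, driven by the evolution of $u^h$, yields an It\^o SDE for $\gamma$ with controlled drift and $O(\sigma)$ diffusion. Via the triangle inequality
\begin{equation*}
\| u^h(t) - V^{\textnormal{TW}}(\cdot - ct)\|_{L^2(\mathbb{R})} \;\le\; \|\ushift(t)\|_{L^2(\mathbb{R})} + C\,|\gamma(t)|,
\end{equation*}
it suffices to control both terms on $[0,T]$ with high probability.

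Next I would apply It\^o's formula to $E(t) := \|\ushift(t)\|_{L^2(\mathcal{D})}^2$. The resulting drift decomposes into four pieces: (i) a quadratic form $\langle \ushift, \mathcal{L}^h \ushift\rangle$ coming from the linearization of the nonlocal discrete Nagumo operator around $\vshift$; (ii) higher-order nonlinear terms bounded by a power of $\|\ushift\|_{L^2}$ times an a priori $L^\infty$-bound on $\ushift$; (iii) a consistency residual of the form $\langle \ushift, (\mathcal{L}^h - \nu\partial_x^2)\vshift + (f-\text{proj})\rangle$ coming from the difference between the discrete nonlocal stencil and the continuous Laplacian acting on the smooth profile $V^{\textnormal{TW}}$; and (iv) an It\^o trace correction $h\sum_i g_N(u_i)^2$ of size $O(\sigma^2)$. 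The stochastic part is a continuous martingale $M(t)$ with quadratic variation $\langle M\rangle_t \le C\sigma^2 \int_0^t \bigl(1 + E(s)\bigr)\,\txtd s$.

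The heart of the argument is the coercivity estimate. The continuous linearization $\nu\partial_x^2 + f'(V^{\textnormal{TW}})$ has a simple zero eigenvalue with eigenfunction $(V^{\textnormal{TW}})'$ and a spectral gap; on the orthogonal complement imposed by the phase condition, the quadratic form is bounded above by $-\alpha\|\cdot\|_{L^2}^2$. A perturbation/consistency argument, using exponential decay of $V^{\textnormal{TW}}$ at $\pm\infty$ to discard boundary contributions on $[-L,L]$ for $L$ large, transfers this gap to $\mathcal{L}^h$ uniformly for $h$ sufficiently small. Together with (ii)--(iv) and absorption of the cubic term for small $\ushift$, one obtains
\begin{equation*}
\txtd E(t) \;\le\; \bigl(-\alpha E(t) + C_1 h^{2p} + C_2\sigma^2\bigr)\,\txtd t + \txtd M(t),
\end{equation*}
for some $p>0$ depending on the stencil weights $J$. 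Stochastic Gr\"onwall and Doob's maximal inequality (or a Burkholder--Davis--Gundy bound on $M$) then yield, provided $E(0) + h^{2p} + \sigma^2$ is small, a probability bound $\Proba[\sup_{[0,T]} E(t) > \delta^2/(4C^2)] \le \tilde\varepsilon/2$. An analogous argument applied to the SDE for $\gamma$ gives $\Proba[\sup_{[0,T]}|\gamma(t)| > \delta/(2C)] \le \tilde\varepsilon/2$, and the triangle inequality closes the proof.

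The main obstacle I expect is step (iii) together with the uniform spectral gap of $\mathcal{L}^h$: one has to prove that the nonlocal difference stencil, once applied to the smooth continuum profile $V^{\textnormal{TW}}$, produces a consistency error that is genuinely $O(h^p)$ in $L^2$ (this requires the natural moment conditions on the weights $J$, and uses the smoothness and exponential tails of the Nagumo front), and that the discretized operator admits a spectral gap on the codimension-one subspace $\{\psi\}^\perp$ independent of $h$. A secondary technical difficulty is handling artifacts from the truncation to $\mathcal{D}=[-L,L]$ (which requires $L$ large), and matching the noise covariance structure implicit in $g_N$ with the continuous noise model so that the It\^o correction stays $O(\sigma^2)$ uniformly in $h$. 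Once these ingredients are in place, the remaining stochastic analysis is essentially standard and the probabilistic bound follows.
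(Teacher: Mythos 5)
Your proposal is a coherent strategy, but it is a genuinely different route from the one taken in the paper. You follow the phase-adaptation/spectral-gap paradigm of Stannat and Hamster--Hupkes: introduce a stochastic phase $\gamma(t)$ via an orthogonality condition against the adjoint translation mode, apply It\^o's formula to $\|\ushift\|_{L^2}^2$, exploit a spectral gap of the discretized linearization on the codimension-one subspace, and close with stochastic Gr\"onwall and Burkholder--Davis--Gundy. The paper never linearizes around the wave and never tracks a phase: it decomposes the error by the triangle inequality into independent pieces --- the lattice-to-continuum error $\|u^h-u\|$ on $\cD$, handled by a monotone-operator/Galerkin compactness argument (Theorem~\ref{theo:convergence}); the small-noise error $\|u-v\|$ between the stochastic and deterministic bounded-domain problems, handled via the mild formulation, It\^o isometry and Gr\"onwall (Lemma~\ref{lemma:errorcontroldetstoch}); the cut-off error $\|v-V\|$, handled via the exponential tails of the front (Lemma~\ref{lemma:errorcontroldet}); and the deterministic $L^2$-stability of the Nagumo front, which is simply cited --- and then converts the resulting mean-square bound into a sup-in-probability bound by a Doob/Chebyshev argument. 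What your route would buy, if completed, is a genuinely dynamical and quantitative estimate: an explicit decay rate $\alpha$, explicit powers of $h$ and of the noise amplitude, and a clean pathwise sup bound via BDG rather than the paper's rather delicate martingale claim for $e_t$. What the paper's route buys is that it avoids entirely the two hardest ingredients of your plan, namely the uniform-in-$h$ spectral gap of the nonlocal discrete linearization on $\{\psi\}^\perp$ and the well-posedness of the phase SDE, at the price of relying on soft compactness and externally cited deterministic stability. Be aware that the two obstacles you flag are precisely where the real work in your approach lies --- the discrete spectral gap appears nowhere in the paper and would have to be established from scratch (plausibly via the nonlocal-stencil spectral results of Bates--Chen--Chmaj or Hupkes--Schouten) --- and note that your argument naturally estimates the distance to the \emph{phase-shifted} wave, so the separate control of $\sup_{t\le T}|\gamma(t)|$ that you sketch is essential to recover the statement as given, which compares against $V^{\textnormal{TW}}(\cdot-ct)$ with the unperturbed speed and no phase correction.
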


The precise formulation of the noise structure, such as the statement of ``sufficiently small noise'' 
will be discussed below, it mainly deals with a sufficiently small covariance 
of the underlying Wiener process, and the growth of $g$. In summary, 
Theorem~\ref{thm:intro} confirms our intuition from biophysics/neuroscience, i.e., 
the wave propagation mechanism is robust against structural nonlocal and stochastic 
perturbations, which makes the Nagumo equation a good model.\medskip 

Our proof relies on a discrete version of the  monotone operator theory approach 
to SPDEs, as presented in \cite{Pardoux} and described in the 
monographs~\cite{Chow,LiuRoeckner1}. Our proof essentially decomposes the different 
error terms~\cite{KuehnKuerschner}, e.g., the dynamical stochastic approximation error 
is treated separately from the discretization error in the stencil. Therefore, it is natural 
to consider several intermediate evolution equations, e.g., the Nagumo PDE on a bounded domain
\begin{equation}
\label{eq:nagumoPDED}\tag{{Nag$\mathcal{D}$}}
 \partial_t v  = \nu \partial_{x}^2 v + f(v),\qquad (t,x) \in [0,T) \times \mathcal{D},
\end{equation}
with Neumann boundary conditions, and similarly the Nagumo SPDE on a bounded domain
\begin{equation}
\label{eq:nagumoSPDED}\tag{{SNag$\mathcal{D}$}}
 \partial_t u  = \nu \partial_{x}^2 u + f(u)+g(u)\xi,\qquad (t,x) \in [0,T) \times \mathcal{D}
\end{equation}
with suitable multiplicative noise desribed in section \ref{ssec:stochbasics}.

Hopefully, our notation conventions are by now already evident to the reader but let us stress
again that we use $v,V$ for the deterministic PDE solutions whereas $u,U$ are SPDE solutions. 
Small letter solutions $u,v$ are based on the bounded domain $\mathcal{D}$ and capital letter solutions
$U,V$ on the unbounded domain $\mathbb{R}$. Furthermore, discrete solutions will be treated as 
vectors $u^h,U^h,v^h,V^h$ or indicated by subindices. 

\section{Notation and Setting}
\label{sec:notation}

We discretize the interval $\mathcal{D} \subset \mathbb{R}$ into $N$ intervals of size $h$ 
and enumerate the respective grid points with the index $i$. The set of grid points is denoted 
by $\mathcal{D}^h$. We work with the Gelfand triple of Banach spaces 
\benn
H_0^1(\mathcal{D}) \cong W_0^{1,2}(\mathcal{D}) \subset L^2(\mathcal{D}) \subset H^{-1}(\mathcal{D}).
\eenn
Note that functions in the Sobolev spaces such as $L^2(\mathcal{D})$ or $W_0^{1,2}(\mathcal{D})$ 
evaluated on the grid $\mathcal{D}^h$ are $N$-dimensional vectors. Extending these functions in a 
piecewise linear manner, we can work with them also in the original Sobolev spaces. We choose an 
orthonormal basis $\{e_k\} \subset L^2(\mathcal{D})$, consisting of elements in $W_0^{1,2}(\mathcal{D})$, 
and span $\mathbb{R}^N$ with the first $N$ of these basis vectors. The projections of the Sobolev spaces 
on their first $N$ basis vectors can then be identified with $\mathbb{R}^N$, e.g.~$P_N H_0^1(\mathcal{D}) 
\cong \mathbb{R}^{N}$. To simplify notation, we denote both the scalar product on $L^2(\mathcal{D})$ and 
on $\mathbb{R}^N \cong P_N L^2(\mathcal{D})$ by $\big(\cdot , \cdot \big) $, while we denote the associated 
norm by $\| \cdot \|$. By $\langle \cdot , \cdot \rangle := \langle \cdot , \cdot 
\rangle_{H^{-1}(\mathcal{D}) , H_0^1(\mathcal{D})} $ we denote the dual product as well as the scalar 
product on $\mathbb{R}^N \cong P_N H_0^1(\mathcal{D}) $, where the projection to $\mathbb{R}^N$ is spanned by 
the first $N$ basis vectors of $H_0^1(\mathcal{D})$. Using the representation 
\begin{equation}\label{eq:onb}
 w = \sum_{k=1}^N \langle w, e_k\rangle e_k \qquad \textup{ for all elements } w \in H^{-1}(\mathcal{D}^h),
\end{equation}
we can work with the same basis vectors also in the space $H_0^1(\mathcal{D})$ and its dual.

\subsection{Operations with discrete-in-space functions}

Let $u^h(\cdot, t)$ be a piecewise linear function on the grid $\mathcal{D}^h$. There are 
several ways to define a (discrete) gradient. Using only two nodal values, we can identify 
$\nabla^h u^h(ih, t)$ either with the backward difference $D^-u_i(t) = 
\frac{1}{h}\left( u_i(t) - u_{i-1}(t)\right)$ or its adjoint, the forward difference 
$D^+u_i(t) = \frac{1}{h}\left( u_{i+1}(t) - u_{i}(t)\right)$. This choice leads to the 
discrete nearest-neighbour Laplacian as $\Delta^h u_i = D^+D^-v_i := 
h^{-2}\left(u_{i+1}  - 2 u_i + u_{i-1} \right)$.

We would like to use more general discrete stencils, which involve up to $R$ neighbours 
of $u_i$ in each direction, in other words involving the nodal values $u_{i-R} \ldots u_{i+R}$. 
We introduce coefficients $J(j)\in \mathbb{R}$ to attribute a weight of the $j$-th right 
neighbouring nodal value $u_{i+j}$. Such a  general second-order stencil then reads 
\begin{equation}
\label{eq:secondorder}
\begin{aligned}
\Delta_R^h u_i 
&= \frac{1}{h^2} \sum_{j=-R}^R J(j) (u_j -  u_i)  \\
&=  \frac{1}{h^2} \left( - 2 \sum_{k=1}^R J(k) u_i \; + \; \sum_{k=1}^R J(k) 
u_{i+k}  \; + \; \sum_{k=1}^R J(k) u_{i-k}   \right) \\
&=  \frac{1}{h^2}  \sum_{k=1}^R J(k) \left( - 2u_i \; + \; u_{i+k}  \; + \;  
u_{i-k}   \right) \\
&=  \frac{1}{h} \left( \frac{1}{h} \sum_{k=1}^R J(k)  (u_{i+k} - u_i)  \; - \; 
 \frac{1}{h}\sum_{k=1}^R J(k)  (u_i -  u_{i-k} )  \right). \\
\end{aligned}
\end{equation}
Note that $J(j)$ are fixed numbers and do not change with time, therefore the central difference 
operator $\Delta_R^h$ it is deterministic and time-independent.
It is therefore natural to define
 \begin{equation}
\begin{aligned}
\nabla^-_R  u_i  &:= (\Delta_R^h)^{1/2} u_i = \frac{1}{h} \sum_{k=1}^R J(k)  (u_i -  u_{i-k} ) 
\end{aligned}
\end{equation}
as the long-range analogues of the difference operators $D^-$. Using the adjoint operator 
to $\nabla^-_R$, denoted by $\nabla^+_R $, we can write \eqref{eq:secondorder} as
\begin{equation}\label{eq:secondorder2}
\begin{aligned}
\Delta_R^h u_i 
&= \nabla^+_R (\nabla^-_R  u_i).
\end{aligned}
\end{equation}
We need to impose conditions on the coefficients $J(j)$ to ensure 
that~\eqref{eq:secondorder} approximates a Laplacian.
To this aim, notice first of all that by construction 
$J(0) = - \sum_{j=-R, j \neq 0}^R J(j)$, which is a special case of diagonal dominance, 
from which we immediately conclude $\langle \Delta^h_R u^h , u^h \rangle \leq 0$. We
make the following assumptions throughout:
 
\begin{assumption}
\label{bed:J}
 The weights $J(j)\in \mathbb{R}$ satisfy
\begin{enumerate}
 \item[(A1)] $J(j) = J(-j)$
 \item[(A2)] $\sum_{j=-R}^R J(j) j^2 = 1$
 \item[(A3)] $\sum_{j=-R}^R J(j) j^4 < \infty \; $ or at least $ \; h^4 
\sum_{j=-R}^R J(j) j^4 \sim o(h^3) $
\end{enumerate}
\end{assumption}

The symmetry condition (A1) ensures that the operator $\Delta^h_R$ is self-adjoint 
in $\ell^2$. (A1) is often not strictly necessary mathematically, but it simplifies 
computations and is moreover very natural considering the real-world phenomena from 
which the model was derived. The moment conditions (A2)-(A3) guarantee that we approximate 
a Laplacian, as can be seen from the construction of finite difference operators 
via Taylor approximation at nodal distance $jh$, which gives 
\begin{equation}
 \sum_{j=-R}^R J(j) (v_j -  v_i) \; = \; h^2 \Big(\sum_{j=-R}^R j^2 J(j)\Big) 
\partial_{x}^2 v_i \; + \; \frac{h^4}{12} \Big(\sum_{j=-R}^R j^4 J(j)\Big) 
\partial_{x}^4 v_i \; + \; O(h^6).
\end{equation}
Note that, in contrast to \cite{BovierGeldhauser}, the assumptions on the moments imply 
a certain decay in the coefficients $J(j)$. This is because we allow for arbitrarily 
diverging stencil range $R$, especially also for $R = N$, while in the semigroup approach 
used in \cite{BovierGeldhauser}, the range was limited to $R \sim N^{1/2}$. 

\subsection{The probabilistic setting}\label{ssec:stochbasics}

Denote by $(\Omega,  \mathcal{F}, (\mathcal{F}_t)_t, \mathbb{P})$ a filtered probability 
space. A function $u: \mathcal{D} \times [0,T] \times \Omega \to \mathbb{R}$, which 
is evaluated on the grid $\mathcal{D}^h$, will be denoted by $u^h(\cdot , t, \omega)$ and 
at each node identified with a stochastic process $X^i_t(\omega)$, which takes values in 
$\mathbb{R}$.

We denote by $W: [0,T]\times \Omega  \to  L^2(\mathcal{D})$ a $Q$-Wiener process 
with values in $L^2(\mathcal{D})$. We assume that $W(t)=W(t,\omega)$ is adapted to the filtration 
$\mathcal{F}_t$.  We construct the covariance operator $Q$ such that is linear, bounded, self-adjoint, and positive semi-definite and that $Q$ has a common set of 
eigenfunctions with $\Delta$, so that
\be
\label{eq:ewtrace}
 Q e_k = \mu_k e_k. 
\ee
Moreover we ensure that $Q$ is of trace class, i.e., $ M_Q := {\rm Tr}\, Q < +\infty$, which implies 
that the sum of the eigenvalues of $Q$ is bounded $ \sum_{k=1}^{\infty} \mu_k < \infty$. 
It is well known that a $Q$-Wiener process in $L^2(\mathcal{D})$ can be represented in 
$L^2(\Omega , C([0,T], L^2(\mathcal{D})))$ using a sequence of iid Brownian motions 
$\left\{ B_j \right\}_{j \in \mathbb{N}}$ and considering the series
\be
\label{eq:tracenoise}
W(t)=W(\cdot,t) =  \sum_{k=1}^{\infty} \sqrt{\mu_k}  e_k(\cdot) B_k(t).
\ee
By means of an exponential inequality and the Borel-Cantelli Lemma, the convergence of the 
series can be obtained uniformly with probability one. Thus, the sample paths of $W(t)$ 
belong to $C([0,T], L^2(\mathcal{D}))$ almost surely, and we may therefore choose a 
continuous version. 

\subsection{The stochastic Nagumo equation} 
\label{subsec:properties}

The stochastic Nagumo equation we are using in this work is a perturbation 
of the (deterministic) Nagumo equation \eqref{eq:nagumo}.  
As stochastic perturbation, we choose a $Q$-Wiener Process $W(t)$ on 
$L^2(\mathcal{D})$ with covariance operator $Q$ being positive semi-definite, 
symmetric and of trace class. Moreover, we take a multiplicative noise term 
called $G(u): L^2(\mathcal{D}) \to \mathcal{H}$,  where we denote by 
$ \mathcal{H}$ the space of Hilbert-Schmidt operators, and assume it is 
Lipschitz continuous and satisfies linear growth conditions. More precisely, 
we assume
\be\label{eq:sigmagrowth}
 \| G (u) \|_{\mathcal{H}}^2 \; \leq \; c (1 + \| u\|^2) \qquad 
\textup{ a.e. } (t, \omega) \in [0,T] \times \Omega
\ee
and 
\be\label{eq:sigmalip}
 \| G (u) - G (v) \|_{\mathcal{H}}^2 \; \leq \; c (\| u - v \|^2) 
\qquad \textup{ a.e. } (t, \omega) \in [0,T] \times \Omega 
\ee
for all $u,v \in L^2(\mathcal{D})$.

For the rest of this work, we will avoid the operator notation and use the representation  
$\left( G(u) \chi \right) (x) \;  := \; g(u(x))\chi(x)$ for $u, \chi \in L^2(\mathcal{D})$ and  $g: \mathbb{R} \to \mathbb{R}$. 
In this notation, we state the core modelling  assumption   
\be\label{eq:gzero}
 g(0) = g(1) = 0,
\ee
which means that the effect of the noise should be concentrated on the wave front.
It is well-known that for such noise, if the initial data $u_0(x) \in [0, 1]$ for all $x \in  \mathcal{D}$, 
the solution also satisfies $u_0(x) \in [0, 1]$ for all $x \in \mathcal{D}$ and $t > 0$. 

To summarize, the lattice equation \ref{eq:discstochnagumointro}
we are considering in this work should be an approximation of a stochastic Nagumo equation 
on a bounded interval $[-L,L]$ driven by multiplicative trace-class noise, which acts only 
on the front. The continuum model is given by
\begin{equation}
\label{eq:stochnagumo}\tag{{SNag$\mathcal{D}$}}
  \txtd u(t) = [ \nu \partial_{x}^2 u(t) + f(u(t))] \txtd t + g(u(t))~\txtd 
	W(t) \qquad \qquad \textup{ on } \mathcal{D} \times [0,T].
\end{equation}
The existence of mild solutions to \eqref{eq:stochnagumo} for Lipschitz 
nonlinearities is classical, see e.g.~\cite{DaPratoZabczyk,KuehnNeamtu2}. Using a 
localization and truncation argument, see e.g.~\cite{cerrai1999, BovierGeldhauser}, 
local-in-time results can be carried over polynomial nonlinearity $f$ 
with one-sided Lipschitz condition such as in~\eqref{eq:stochnagumo}, while
global-in-time results have to exploit the sign in the cubic nonlinearity 
leading to dissipativity for large $|u|$~\cite{BerglundGentz10,Robinson1,KuehnNeamtuPein}. 
Via monotone operator theory, one may see furthermore \cite{Pardoux} that~\eqref{eq:stochnagumo} 
admits a variational solution in $L^2(\Omega , C([0,T],L^2(\cD))) \cap 
L^2(\Omega \times [0,T], H_0^1(\cD))$. In particular, we have that almost 
surely $u \in L^{\infty}([0,T];L^2(\cD)) \cap L^2([0,T]; H_0^1(\cD))$. 
Due to It\^{o}'s formula, the stochastic Nagumo equation satisfies an energy 
equation of the form
\begin{equation}
\label{eq:continuousito}
\begin{aligned}
\E \left[\|u(t)\|^2_{L^2} \right]  & \; = \; \|u(0)\|^2_{L^2} + \; 2 \nu 
\E \left[ \int_0^t \left( \Delta u (s), u(s)\right) ~\txtd s \right]  \\ 
  & + \, 2  \; \E \left[ \int_0^t \left( f(u(s)), u(s)\right)~\txtd s\right]
		\;  + \E \left[ \int_0^t g(u(s))^2~ \txtd s \right] .
\end{aligned}
\end{equation}
Regarding the stochastic LDE version of the Nagumo SPDE, we may introduce the convenient abbreviation 
\benn
\Delta_R^h u_i := \frac{1}{R h^2} \sum_{j=-R}^R J(j) (u_j -  u_i)
\eenn
to indicate that the difference stencil can be regarded a (generalized) discretization of the Laplace 
operator, if the coefficients $J(j) \in \mathbb{R}$ satisfy Assumption \ref{bed:J}. This fact will be 
justified in more detail in the next section. Moreover, taking advantage of the vector notation 
$u^h:=(u_1,u_2,\ldots,u_N)$, we may write the discrete-in-space evolution \eqref{eq:discstochnagumointro} as 
\be
\label{eq:discstochnagumo}\tag{{dSNag}}
  \txtd u^h (t)  =  \nu \Delta_R^h u^h (t)
  \txtd t  + f( u^h(t) ) ~\txtd t \; + \; g(u^h(t))~ \txtd W^h(t)  \qquad \textup{on } \mathcal{D}^h \times [0,T],
\ee
where we denoted by $W^h (t)$ a sufficiently large, yet finite, partial sum to the infinite sum 
in~\eqref{eq:tracenoise} and denoted $\mathcal{D}^h := \lbrace 1, \ldots, N\rbrace$. Recalling $\left( G(u) \chi \right) (x) \;  := \; g(u(x))\chi(x)$ for $u, \chi \in L^2(\mathcal{D})$, and that $g$ is a function $g:\mathbb{R} \to \mathbb{R}$, we may define the discrete 
multiplicative noise operator in the same way for $u^h$ (without changing the notation). It then obviously satisfies~\eqref{eq:sigmagrowth} 
and~\eqref{eq:sigmalip}. Due to the trace class assumption, we may always select the truncation level 
for the Wiener process sufficiently large to guarantee that solutions of~\eqref{eq:discstochnagumo} stay 
close to the same equation driven by $W$.

\subsection{Monotone operators}

The following paragraph recalls that, due to the properties of $f$, the sum 
$\nu \Delta + f$ defines a monotone operator. In the continuous context, this 
is well-understood: a concise treatment of the theory of monotone operators can 
be found for example, in \cite{Zeidler2b}, or, including local monotonicity, 
in \cite{LiuRoeckner1}. We will briefly state those precise properties, which 
will be used in the proofs. First, we note that the nonlinear term $f(u)$ is 
Lipschitz continuous w.r.t $u$ on bounded subsets of $H_0^1(\mathcal{D})$ with 
Lipschitz constant independent of $t$. More precisely, we have the following 
standard results, which we include for completeness here:

\begin{lemma}
\label{lemma:liploc}
For any $M >0$, there exists a constant $K_M > 0$ such that the local 
Lipschitz continuity condition holds:
 \be
 \label{eq:liploc}
  \| f(v_1) - f(v_2) \|^2 \; \leq \; K_M \| v_1 - v_2 \|_{H_0^1(\mathcal{D})}^2 
	\qquad \textup{a.e. } (t,\omega) \in [0,T] \times \Omega
 \ee
for any $v_1 , v_2 \in H_0^1(\mathcal{D})$ with $\| v_1 \|_{H_0^1(\mathcal{D})}^2 
< M$ and $\| v_2 \|_{H_0^1(\mathcal{D})}^2 < M$ 
\end{lemma}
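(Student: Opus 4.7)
The plan is to exploit the polynomial structure of $f$ together with the one-dimensional Sobolev embedding to turn the estimate into a straightforward bound on a difference of cubics times an $L^\infty$-controlled factor. Since $f(v) = v(1-v)(v-a)$ is a cubic, I would first factor the difference: using the elementary identity for polynomials, write
\benn
 f(v_1) - f(v_2) = (v_1 - v_2)\, P(v_1, v_2),
\eenn
where $P(v_1,v_2) = -\bigl(v_1^2 + v_1 v_2 + v_2^2\bigr) + (1+a)(v_1+v_2) - a$ is a polynomial of degree two in $(v_1,v_2)$ obtained by expanding $f(v_1)-f(v_2)$ and pulling out the factor $v_1 - v_2$.

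Next I would estimate $\|f(v_1) - f(v_2)\|^2 = \int_{\mathcal{D}} (v_1-v_2)^2 P(v_1,v_2)^2 \,\txtd x$ by pulling $P$ out in $L^\infty$:
\benn
 \|f(v_1)-f(v_2)\|^2 \leq \|P(v_1,v_2)\|_{L^\infty(\mathcal{D})}^2\, \|v_1 - v_2\|^2.
\eenn
Because $\mathcal{D}$ is a bounded interval in $\mathbb{R}$, the Sobolev embedding $H_0^1(\mathcal{D}) \hookrightarrow L^\infty(\mathcal{D})$ holds with some embedding constant $C_S$, so $\|v_i\|_{L^\infty(\mathcal{D})} \leq C_S \|v_i\|_{H_0^1(\mathcal{D})} \leq C_S \sqrt{M}$. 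Substituting these bounds into the explicit quadratic $P$ yields
\benn
 \|P(v_1,v_2)\|_{L^\infty(\mathcal{D})} \leq 3 C_S^2 M + 2(1+a) C_S \sqrt{M} + a =: \tilde{K}_M.
\eenn

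Finally, since $\|v_1 - v_2\| \leq C_P \|v_1 - v_2\|_{H_0^1(\mathcal{D})}$ via Poincaré's inequality (with $C_P$ depending only on $|\mathcal{D}|$), I would combine the two estimates to obtain
\benn
 \|f(v_1) - f(v_2)\|^2 \leq \tilde{K}_M^2\, C_P^2\, \|v_1 - v_2\|_{H_0^1(\mathcal{D})}^2,
\eenn
and set $K_M := \tilde{K}_M^2 C_P^2$, which depends only on $M$, $a$, and $\mathcal{D}$, and in particular is independent of $t$ and $\omega$ since $f$ is deterministic and autonomous. There is no real obstacle here; the only step requiring a small amount of care is the Sobolev embedding, which is clean because we are in one spatial dimension so $H_0^1 \hookrightarrow L^\infty$ is available and no extra integrability assumption on the $v_i$ is needed.
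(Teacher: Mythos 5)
Your proof is correct, but it takes a different route from the paper. You factor the full cubic as $f(v_1)-f(v_2)=(v_1-v_2)P(v_1,v_2)$ with $P$ quadratic, control $P$ in $L^\infty(\mathcal{D})$ via the one-dimensional embedding $H_0^1(\mathcal{D})\hookrightarrow L^\infty(\mathcal{D})$, and then pass from $\|v_1-v_2\|_{L^2}$ to $\|v_1-v_2\|_{H_0^1}$ by Poincar\'e. The paper instead isolates only the highest-order term, writing $v_1^3-v_2^3=(v_1^2+v_1v_2+v_2^2)(v_1-v_2)$, and estimates the resulting products of the form $\|v_i^2(v_1-v_2)\|_{L^2}$ by H\"older together with $L^4$/$L^6$-type Sobolev embeddings of $H_0^1$, obtaining $\|v_1^3-v_2^3\|^2\le C_3(\|v_1\|_{H_0^1}^4+\|v_2\|_{H_0^1}^4)\|v_1-v_2\|_{H_0^1}^2$ (the lower-order terms of $f$ are left implicit). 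The trade-off is as follows: your argument is more elementary and handles all of $f$ in one stroke, but it leans on $H^1\hookrightarrow L^\infty$, which is special to one spatial dimension; the paper's product estimates via $L^4$/$L^6$ embeddings survive in dimensions up to three, so that version of the lemma would carry over to higher-dimensional variants of the model. Both yield a constant $K_M$ depending only on $M$, $a$ and $\mathcal{D}$, hence independent of $(t,\omega)$ as required.
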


\begin{proof}
We have 
 \be
  \| v_1^3 - v_2^3 \|^2 = \| (v_1^2 + v_1v_2 + v_2^2)(v_1 - v_2) 
	\|^2 \leq \; 8 \; \left( \|v_1^2 (v_1 -v_2)\|^2 \; + \; 
	\|v_2^2 (v_1 - v_2)\|^2 \right)
 \ee
By Sobolev embedding, we can get for some constants $C_1, C_2 >0$ the 
estimates $\|v_1\|_{L^4} \leq C_1 \|v_1\|_{H_0^1}$ and $\|v_1^2v_2\|_{L^2}^2 
\leq C_2 \|v_1\|_{H_0^1}^4 \|v_2\|_{H_0^1}^2$. 
Hence, there exists a constant $C_3>0$ such that 
\be
   \| v_1^3 - v_2^3 \|^2 \leq \; C_3 \; \left( \|v_1\|_{H_0^1}^4 \; + \;
		\|v_2\|_{H_0^1}^4  \right) \| v_1 - v_2 \|_{H_0^1}^2
\ee
which satisfies \eqref{eq:liploc}.
\end{proof}

Furthermore, we can derive estimates similar to Lemma \ref{lemma:liploc}, providing us 
growth bounds such as
\be
 \|f(v)\|_{H^{-1}(\mathcal{D})} \leq c_1 \|v\|_{H_0^1(\mathcal{D})} 
\left( 1 + \|v|^2_{L^2(\mathcal{D})} \right)
\ee
as well as
\be
 \|f(v_1) - f(v_2)\|_{H^{-1}(\mathcal{D})} \leq c_2  \left(1 + 
\|v_1\|^2_{H_0^1(\mathcal{D})} + \|v_2\|^2_{H_0^1(\mathcal{D})} 
\right) \| v_1 - v_2 \|_{L^2(\mathcal{D})}.
\ee
Moreover, the combined operator $A := \nu \Delta + f$ is obviously hemi-continuous 
in $H_0^1(\mathcal{D})$, in the sense that for all 
$v_1, v_2, v_3 \in H_0^1(\mathcal{D})$ and $t \in [0,T]$ the mapping
\be
 \theta \mapsto \langle A(v_1 + \theta v_2) , v_3 \rangle 
\ee
is continuous from $\mathbb{R}$ into $\mathbb{R}$. Due to \eqref{eq:sigmagrowth} and 
\eqref{eq:sigmalip}, we know~\cite{Pardoux,Chow,LiuRoeckner1,Stannat} that the 
sum of operators satisfies for all $v \in H_0^1(\mathcal{D})$, $ t\in [0,T]$ 
the coercivity condition
\be
\label{eq:coercivity}
\left\langle \nu \Delta v + f(v) , v \right\rangle \, + \| g(v)\|_{\mathcal{H}} 
\leq  \; - \, \nu \| v\|_{H_0^1(\mathcal{D})}^2+(c_a+\nu)\| v\|_{L^2(\mathcal{D})}^2,
\ee
with $ c_a = \sup_{\upsilon \in \mathbb{R}}  f'(\upsilon)  =  \frac{1}{3} 
\big( a^2 - a + 1\big)$, holds. Finally, the sum of operators satisfies for all 
$v_1, v_2 \in H_0^1(\mathcal{D})$ the  monotonicity condition
\be
\label{eq:monotonicity}
\left\langle \nu \Delta v_1 + f(v_1) - \nu \Delta v_2 - f(v_2), v_1 - v_2 
\right\rangle \, + \| g(v_1) - g(v_2)\|_{\mathcal{H}}^2 \leq  \, c_a \| v_1 - v_2 \|^2
\ee
on $[0,T] \times \Omega$. We will now verify similar properties hold in the 
discrete setting of our LDE \eqref{eq:discstochnagumo}. The proof is elementary, yet 
we will provide it in detail to make the strategy transparent for stochastic LDEs.

\begin{lemma}
Let the conditions of Assumption \ref{bed:J} on the general stencil $ \Delta_R^h$ 
be satisfied. Then the discrete operators appearing in \eqref{eq:discstochnagumo} 
satisfy the following estimates:
\begin{itemize}
 \item[(L1)] coercivity
  \be
	\label{eq:discretecoercivitynoise}
  \sum_{i=1}^N  \left( \nu  \Delta_R^h u_i + f(u_i) \right) u_i \, + \| g(u_i)\|^2 
	\; \leq  \; - \, \nu \| \nabla^-_R  u^h\|^2 + c_a \| u^h \|^2
 \ee
 \item[(L2)] monotonicity 
 \be
 \label{eq:discretemonotonicitynoise}
 \sum_{i=1}^N  \left( \nu \Delta_R^h u_i - \nu  \Delta_R^h v_i + f(u_i) - f(v_i)\right) 
(u_i - v_i )\, + \|g(u_i) - g(v_i)\|^2 \, \leq  \, c_a \| u^h - v^h \|^2.
 \ee
\end{itemize}
\end{lemma}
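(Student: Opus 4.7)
My plan is to mirror the continuous coercivity/monotonicity arguments leading to (\ref{eq:coercivity}) and (\ref{eq:monotonicity}), replacing integration-by-parts with the discrete summation-by-parts supplied by the factorisation $\Delta_R^h = \nabla_R^+ \nabla_R^-$ from (\ref{eq:secondorder2}). Since the diffusion, reaction and noise contributions enter additively on the left-hand sides of (L1) and (L2), I would estimate them one by one and combine the three bounds at the end.

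For (L1), the diffusion piece is handled by the summation-by-parts identity
\[
\sum_{i=1}^N (\Delta_R^h u_i)\, u_i \;=\; \sum_{i=1}^N (\nabla_R^+ \nabla_R^- u_i)\, u_i \;=\; -\,\|\nabla_R^- u^h\|^2,
\]
which, multiplied by $\nu$, supplies exactly the $-\nu\|\nabla_R^- u^h\|^2$ term on the right. For the reaction term I would prove the pointwise bound $f(u)u \leq c_a u^2$ for every $u \in \mathbb{R}$: this reduces to $(1-u)(u-a) \leq c_a = \tfrac{1}{3}(a^2-a+1)$, and the maximum $(1-a)^2/4$ of the left-hand side satisfies it because the inequality simplifies to $(a+1)^2 \geq 0$. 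Summing yields $\sum_i f(u_i)u_i \leq c_a \|u^h\|^2$. The noise contribution $\sum_i g(u_i)^2$ is estimated with (\ref{eq:sigmalip}) and (\ref{eq:gzero}), which together give $|g(u)| \leq L_g |u|$ pointwise, and the resulting $L_g^2 \|u^h\|^2$ is absorbed into the constant on the right as in the continuous coercivity.

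For (L2) the decomposition is identical, with $u^h - v^h$ in the role of $u^h$. Summation by parts gives $\sum_i \Delta_R^h(u_i-v_i)(u_i-v_i) = -\|\nabla_R^-(u^h-v^h)\|^2 \leq 0$, so the diffusion difference can simply be dropped from the left. The reaction difference is controlled by the pointwise one-sided Lipschitz bound $(f(u)-f(v))(u-v) \leq c_a (u-v)^2$, obtained by writing $f(u)-f(v) = (u-v)\int_0^1 f'(v+t(u-v))\,\txtd t$ and invoking $\sup_{\upsilon \in \mathbb{R}} f'(\upsilon) = c_a$. The noise difference is handled directly by (\ref{eq:sigmalip}): $\sum_i (g(u_i)-g(v_i))^2 \leq L_g^2 \|u^h-v^h\|^2$, again absorbed into the right-hand constant.

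I expect the main obstacle to be bookkeeping around the discrete summation-by-parts identity $\langle \nabla_R^- u^h, w^h\rangle = -\langle u^h, \nabla_R^+ w^h\rangle$ for the \emph{long-range} stencil: the telescoping that is immediate for the nearest-neighbour case $R=1$ must be executed index by index, with the symmetry $J(j)=J(-j)$ from Assumption \ref{bed:J}(A1) ensuring that $\Delta_R^h$ is self-adjoint and $\nabla_R^+$ plays the role of the adjoint of $\nabla_R^-$. Boundary contributions on $\{1,\dots,N\}$ do not appear thanks to the zero-padding implicit in the $H_0^1(\mathcal{D})$ convention of Section \ref{sec:notation}. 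Once this identity is in hand, (L1) and (L2) follow by a straightforward combination with the elementary pointwise estimates on $f$ and $g$ listed above.
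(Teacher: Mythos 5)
Your proposal is correct and follows essentially the same route as the paper: discrete summation by parts via the factorisation $\Delta_R^h = \nabla^+_R\nabla^-_R$ for the diffusion term, the one-sided bound $(f(u)-f(v))(u-v)\le c_a (u-v)^2$ for the reaction term (the paper phrases the coercivity version via the mean value theorem applied to $(f(u_i)-f(0))/u_i$, which is equivalent to your direct verification of $f(u)u\le c_a u^2$), and the structural conditions on $g$ for the noise contributions. The only cosmetic differences are that the paper first treats the nearest-neighbour stencil before passing to the general one, and that it invokes the growth condition \eqref{eq:sigmagrowth} where you use $g(0)=0$ together with Lipschitz continuity; in both versions the noise term strictly enlarges the constant multiplying $\|u^h\|^2$ beyond $c_a$, a point the paper glosses over exactly as you do.
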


\begin{proof}
First, note that
\begin{equation}\label{eq:pre-coercivity}
 \sum_{i=1}^N f(u_i) \cdot u_i =  \sum_{i=1}^N \left( f(u_i) - f(0) \right)
\left( u_i - 0 \right) =  \sum_{i=1}^N \left( \frac{f(u_i) - f(0)}{u_i - 0} \right)
\left( u_i - 0 \right)^2 \leq c_a  \sum_{i=1}^N u_i^2 
\end{equation}
where we used the mean-value theorem in the last inequality. 
Second, we look at the discrete integration by parts formula, which reads 
in its standard form
\begin{equation}
  \sum_{i=1}^N \Delta^h u_i \cdot u_i =   \sum_{i=1}^N D^+ (D^- u_i) \cdot u_i 
	= -  \sum_{i=1}^N (D^- u_i) \cdot D^- u_i = - \sum_{i=1}^N (D^- u_i)^2.
\end{equation}
It can be extended to the long-range case via the operators $\nabla^-_R $ 
and $\nabla^+_R$. Using the last equation and previous inequality, we can derive easily, 
in the special case of the nearest-neighbour stencil, the coercivity
\begin{equation}
\label{eq:discretecoercivitynoise_nn}
 \sum_{i=1}^N  \left( \nu  D^+ (D^- u_i) + f(u_i) \right) u_i \, +
 \| g(u_i)\|^2 \; \leq  \; - \, \nu \| D^- u^h\|^2 + c_a \| u^h \|^2
\end{equation}
where we used \eqref{eq:sigmagrowth}. Using \eqref{eq:sigmalip}, we get the 
monotonicity of the sum of operators 
\begin{equation}
\label{eq:discretemonotonicitynoise_nn}
\sum_{i=1}^N  \left( \nu  D^+ (D^- u_i) - \nu  D^+ (D^- v_i) + f(u_i) -
 f(v_i)  \right) (u_i - v_i )\, + \|g(u_i) - g(v_i)\|^2 \, \leq  \, c_a 
\| u^h - v^h \|^2.
\end{equation}
 For the general case, it was already noted (without proof) by Bates, Chen, 
Chmai \cite{BatesChenChmaj}, that general stencils of the form \eqref{eq:secondorder} 
satisfy the monotonicity condition with $c = 0$ instead of $c_a$. 
Indeed, testing with $u^h$ and using the summation by parts formula we obtain
\begin{equation}
\label{eq:longrangesummationbyparts}
\begin{aligned}
\langle \Delta_R^h u^h , u^h \rangle 
&= \frac{1}{h^2}  \sum_{i=1}^N \left(\sum_{j=-R}^R J(j) (u_j -  u_i) 
\right) \cdot u_i  \\
&= \sum_{i=1}^N \nabla^+_R (\nabla^-_R  u_i) \cdot u_i \\
&= -  \langle \nabla^-_R  u^h , \nabla^-_R  u^h \rangle  \; = \; - \| 
\nabla^-_R  u^h \|^2 \; \leq \; 0 .
\end{aligned}
\end{equation}
Repeating the strategy of the nearest neighbour case, using 
again \eqref{eq:pre-coercivity}, \eqref{eq:sigmagrowth}, \eqref{eq:sigmalip} 
and \eqref{eq:longrangesummationbyparts} gives us 
immediately~\eqref{eq:discretecoercivitynoise} 
and~\eqref{eq:discretemonotonicitynoise}, which concludes the proof. 
\end{proof}




\section{Existence and stability of travelling wave fronts}

Recall that our goal is show that the stochastic LDE~\eqref{eq:discstochnagumo} admits, 
for sufficiently small $h$ and sufficiently small noise, travelling 
front-like solutions, in the sense that its solutions are very likely to 
be close to classical deterministic travelling fronts; see also Theorem~\ref{thm:intro}.

To approach this problem, we use several ingredients: first, results on the existence 
and properties of solutions to the LDE \eqref{eq:discstochnagumo}; second, the convergence 
of the solutions of the $h$ approximations to the solution of \eqref{eq:stochnagumo}; third, 
approximations of the classical deterministic front via the truncated spatial 
problem~\eqref{eq:nagumoPDED}; fourth, small noise stability of traveling wave fronts 
for \eqref{eq:nagumoSPDE}. 

The rest of this paper is organized along these ingredients: In Section~\ref{sec:apriori}, 
we investigate existence and properties of solutions to the LDE \eqref{eq:discstochnagumo}.
We follow this up by a discrete-to-continuum convergence result in Section~\ref{sec:convergence}. 
In Section~\ref{ssec:cutoff}, we study the truncation error of restricting the solutions to 
a bounded interval and in Section~\ref{ssec:smallnoise} we quantify the error coming from 
the stochastic perturbation. In Section~\ref{sec:maintheorem}, we finally obtain the main 
result, incorporating an SPDE small-noise stability result.
 
\subsection{A priori estimates}
\label{sec:apriori}

Recall again that we are going to employ the notation $\| \cdot \| := \| \cdot \|_{L^2(\mathcal{D})}$.
Rigorous results on the existence and properties of solutions to equation~\eqref{eq:discstochnagumo} 
are obtained in the classical framework of strong solutions of stochastic ordinary differential equations 
(SODEs). The key part of this section is the following a priori estimate:

\begin{proposition}
\label{prop:aprioriu}
Let the initial data $u_0\in C^4(\mathcal{D})$ be deterministic and suppose
Assumption~\ref{bed:J} is satisfied. 
Then the solution $u^h$ of~\eqref{eq:discstochnagumo} satisfies for any $h$  
\begin{equation}
 \E \left[ \int_0^T \|\nabla_R^- u^h(t)\|^2~\txtd t \; + \; 
\sup_{t \leq T} \|u^h(t)\|^2 \right] < \infty.
\end{equation}
\end{proposition}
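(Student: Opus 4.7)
The plan is to apply It\^{o}'s formula to $\|u^h(t)\|^2$, invoke the discrete coercivity estimate (L1) from the previous lemma to produce the $\nabla_R^-$-gradient control, and then combine a Gronwall argument with the Burkholder--Davis--Gundy (BDG) inequality to close the supremum bound. Since \eqref{eq:discstochnagumo} is a finite-dimensional SODE whose drift is locally Lipschitz (sum of a linear operator and the cubic $f$) and whose diffusion is globally Lipschitz of linear growth by \eqref{eq:sigmagrowth}--\eqref{eq:sigmalip}, standard SODE theory gives a unique strong solution up to a possible explosion time. To make the manipulations rigorous I would introduce the stopping times $\tau_M := \inf\{t \geq 0 : \|u^h(t)\|^2 \geq M\} \wedge T$, derive the bounds on $[0, t \wedge \tau_M]$, and then pass to $M \to \infty$ via Fatou's lemma, which also rules out explosion a posteriori.

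Applying It\^{o}'s formula to $\|u^h\|^2$ on $[0, t \wedge \tau_M]$ gives
\benn
\|u^h(t\wedge\tau_M)\|^2 = \|u^h(0)\|^2 + 2\int_0^{t\wedge\tau_M}\langle \nu\Delta_R^h u^h + f(u^h),\, u^h\rangle\,\txtd s + \int_0^{t\wedge\tau_M}\|g(u^h)\|^2\,\txtd s + M_t^M,
\eenn
where $M_t^M := 2\int_0^{t\wedge\tau_M}\langle u^h,\, g(u^h)\,\txtd W^h\rangle$ is a square-integrable martingale because the integrand is bounded on $[0,\tau_M]$. Inserting twice the coercivity (L1) and discarding the nonnegative $\|g(u^h)\|^2$ on the right-hand side yields
\benn
\|u^h(t\wedge\tau_M)\|^2 + 2\nu \int_0^{t\wedge\tau_M}\|\nabla_R^- u^h\|^2\,\txtd s \; \leq \; \|u^h(0)\|^2 + 2c_a\int_0^{t\wedge\tau_M}\|u^h\|^2\,\txtd s + M_t^M.
\eenn
Taking expectation eliminates $M_t^M$, and Gronwall's inequality then bounds $\E\|u^h(t\wedge\tau_M)\|^2$ by $\|u^h(0)\|^2 e^{2c_a T}$ uniformly in $M$; reinserting this back into the inequality controls $\E\int_0^{T\wedge\tau_M}\|\nabla_R^- u^h\|^2\,\txtd s$.

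For the supremum-in-time bound I would first take $\sup_{t \leq T}$ inside the inequality above, and then apply BDG to the stochastic part,
\benn
\E\sup_{t\leq T}|M_t^M| \; \leq \; C\,\E\left(\int_0^{T\wedge\tau_M}\|u^h\|^2\,\|g(u^h)\|^2\,\txtd s\right)^{1/2}.
\eenn
Using the linear growth \eqref{eq:sigmagrowth} and Young's inequality $ab \leq \tfrac{1}{4}a^2 + b^2$, the right-hand side is bounded by $\tfrac{1}{2}\,\E\sup_{t\leq T\wedge\tau_M}\|u^h\|^2 + C'\,\E\int_0^{T}(1+\|u^h\|^2)\,\txtd s$, so the supremum term can be absorbed into the left-hand side of the energy estimate. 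Combined with the bound on $\E\|u^h(s)\|^2$ already established, this yields the desired estimate uniformly in $M$, and Fatou's lemma transfers it to $M = \infty$. The main technical subtlety is precisely this BDG/Young absorption step, which must be arranged carefully because $\|u^h\|^2\,\|g(u^h)\|^2 \lesssim \|u^h\|^2(1+\|u^h\|^2)$ brings in a quadratic factor from the multiplicative noise; the rest of the argument is a standard energy/Gronwall computation driven by the discrete coercivity (L1), and no further structural assumption on the stencil beyond Assumption~\ref{bed:J} is required.
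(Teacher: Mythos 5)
Your proposal is correct and follows essentially the same route as the paper: Itô's formula for $\|u^h\|^2$, the discrete coercivity (L1) to extract the $\nabla_R^-$ term, Gronwall for the expected $L^2$ bound, and Burkholder--Davis--Gundy with a Young absorption for the supremum. The only difference is your explicit stopping-time localization and Fatou passage, which the paper omits (it simply asserts the stochastic integral has zero expectation); this is a harmless, indeed slightly more careful, refinement rather than a different argument.
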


The proof of Proposition \ref{prop:aprioriu} is split into three parts, which are 
Lemmata \ref{lemma:energyito}, \ref{lemma:boundedH1} and \ref{lemma:boundedL2}, which
are proved under the same assumptions as Proposition \ref{prop:aprioriu}.

\begin{lemma}
\label{lemma:energyito}
For any $h >0$, the solution $u^h$ of~\eqref{eq:discstochnagumo} exists and 
satisfies an energy equality
\begin{equation}
\label{eq:energyito}
\begin{aligned}
\|u^h(t)\|^2_{L^2} & \; =  \; \|u^h(0)\|^2_{L^2} \; + \; 2 \nu \int_0^t 
\left\langle \Delta_R^h u^h(s), u^h(s)\right\rangle~\txtd s \\ 
  & + \, 2 \; \int_0^t \left( f(u^h(s)), u^h(s)\right) ~\txtd s\;  
	+ \; 2 \int_0^t g(u^h(s)) \left( u^h(s), \txtd W^h(s)\right) +  \int_0^t g(u^h(s))^2~ 
	\txtd s
\end{aligned}
\end{equation}
\end{lemma}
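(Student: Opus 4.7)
The plan is two-fold: first, establish global-in-time existence of a unique strong solution to~\eqref{eq:discstochnagumo}; second, read off~\eqref{eq:energyito} as a direct application of the finite-dimensional It\^o formula.

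For existence, I would exploit the fact that~\eqref{eq:discstochnagumo} is genuinely a finite-dimensional SODE on $\mathbb{R}^N$: the stencil $\Delta_R^h$ is a bounded linear operator, the drift is a polynomial (hence locally Lipschitz) in the state, and the diffusion inherits from~\eqref{eq:sigmagrowth}--\eqref{eq:sigmalip} the global Lipschitz and linear-growth properties. Standard SODE theory thus produces a unique strong solution up to some explosion time $\tau_\infty$. To rule out explosion, introduce the stopping times $\tau_M := \inf\{t\ge 0 : \|u^h(t)\| \ge M\}$, apply the finite-dimensional It\^o formula to $\phi(u) := \|u\|^2$ on the stopped process, and take expectations. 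The stochastic integral is a true martingale (bounded by $M^2$ up to the stopping), so what remains is controlled by the coercivity estimate~\eqref{eq:discretecoercivitynoise} from the preceding lemma, yielding
\benn
 \E \|u^h(t\wedge\tau_M)\|^2 \;\le\; \|u^h(0)\|^2 + 2 c_a \int_0^t \E \|u^h(s\wedge\tau_M)\|^2 \,\txtd s,
\eenn
since the contribution of $-\nu\|\nabla_R^- u^h\|^2$ is nonpositive and can be dropped. Gr\"onwall gives an $M$-independent bound, and Chebyshev then forces $\Proba(\tau_\infty \le T) = 0$ for every $T > 0$, so the solution is global.

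With global existence secured, the energy equality is a direct consequence of the finite-dimensional It\^o formula applied to $\phi(u) = \sum_{i=1}^N u_i^2$ on $[0,t]$. Since $\nabla\phi(u) = 2u$ and $D^2\phi(u) = 2 I$, substitution of $\txtd u^h = [\nu \Delta_R^h u^h + f(u^h)]\,\txtd t + g(u^h)\,\txtd W^h$ reproduces exactly the four contributions on the right-hand side of~\eqref{eq:energyito}; the quadratic-variation term collapses to $\int_0^t g(u^h(s))^2\,\txtd s$ because the truncation $W^h$ involves only finitely many independent Brownian motions whose covariances are summable by the trace-class assumption. The main obstacle is the superlinear growth of the cubic $f$, which by itself would admit blow-up in a generic SODE; the rescue is precisely the dissipative sign of its leading term captured in~\eqref{eq:discretecoercivitynoise}, which is the only place where the specific structure of $f$ enters in an essential way. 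Once no-blowup is in hand, the remaining work is standard It\^o calculus.
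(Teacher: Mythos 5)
Your proposal follows essentially the same route as the paper: both treat \eqref{eq:discstochnagumo} as a genuinely finite-dimensional It\^o SODE and obtain \eqref{eq:energyito} by applying the finite-dimensional It\^o formula to $u \mapsto \|u\|^2$, with the quadratic variation term collapsing thanks to the truncated trace-class noise. The only difference is that you explicitly supply the stopping-time, coercivity, and Gr\"onwall argument ruling out finite-time explosion of the cubic drift, a point the paper's proof passes over by simply citing standard SODE existence theory; this is a refinement of the same argument rather than a different approach.
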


\begin{proof}
The stochastic LDE~\eqref{eq:discstochnagumo} is just a system of SODEs, which is rigorously 
stated in integral form
\begin{equation}
\label{eq:discstochnagumointegrated}
\begin{aligned}
u_i(t) &= u_0 + \frac{\nu}{h^2}\int_0^t 
\sum_{j=-R}^R J(j) (u_j -  u_i) ~\txtd s\;  + \; \int_0^t f(u_i(s)) ~\txtd s 
+ \int_0^t g(u_i) ~\txtd W_i(s) \;, \quad i=1, \ldots, N.
\end{aligned}
\end{equation}
From \eqref{eq:discstochnagumointegrated} it is easily seen that for each single $i$, the 
stochastic LDE is in fact an It\^{o} equation, for which it is well known 
(see e.g.~\cite{KaratzasShreve}) that there exists a solution, which is an adapted process. 
Moreover, as $u_i$ are the coefficients of $u^h$ in the basis of $\mathbb{R}^N \cong 
P_N H_0^1(\mathcal{D})$, \eqref{eq:discstochnagumointegrated} is a finite-dimensional 
It\^{o} equation, which therefore has a solution as an adapted process. This adapted process 
also has a continuous version. To derive the energy equation, for some fixed 
$h = \frac{1}{N}$ we build a solution vector $u^h$ via the integral form of the 
stochastic LDE \eqref{eq:discstochnagumo} given by 
equation~\eqref{eq:discstochnagumointegrated}. We can write the solution, 
using an orthonormal basis $e_i$ of $V^h := P_N H_0^1$, in the following form
\begin{equation}
\label{eq:tested1}
\begin{aligned}
\left(u^h(t), e_i \right)  \, & = \; \left(u^h_0,e_i\right) \; + \; \nu 
\int_0^t \left\langle \Delta_R^h u^h, e_i \right\rangle ~\txtd s + \, \; \int_0^t 
\left( f(u^h) ,e_i \right)~\txtd s\; \\
& + \;  \int_0^t g (u^h) \Big(e_i, \txtd W^h(s)\Big). 
\end{aligned}
\end{equation}
Next, in \eqref{eq:tested1} we sum up from $i=1$ to $i=N$ and apply It\^{o}'s formula 
in finite dimensions to get
\begin{equation}
\label{eq:energyitofinal}
\begin{aligned}
\|u^h(t)\|^2 & \; =  \; \|u^h(0)\|^2 \; + \; 2 \nu \int_0^t \left\langle 
\Delta_R^h u^h(s), u^h(s)\right\rangle ~\txtd s 
  + \, 2\; \int_0^t \left( f(u^h(s)), u^h(s)\right) ~\txtd s\;\\  & \; + \; 2 \int_0^t g(u^h(s)) 
	\left( u^h(s), \txtd W^h(s)\right) \; + \;  \int_0^t g(u^h(s))^2~\txtd s
\end{aligned}
\end{equation}
which is exactly \eqref{eq:energyito}.
\end{proof}

With the way of writing \eqref{eq:tested1}  we already point to the fact that we consider 
the stochastic LDE as a generalized Galerkin approximation of the stochastic Nagumo 
equation. Indeed, recall that \eqref{eq:stochnagumo} has a solution whose trajectory 
is in $L^2([0,T], H_0^1(\mathcal{D}))$, $\Delta u(\cdot) \in L^2([0,T], H^{-1}(\mathcal{D}))$.
Hence, we may write the stochastic Nagumo equation in weak form, using the scalar product 
$( \cdot , \cdot)_{L^2(\mathcal{D})}=(\cdot,\cdot)$ and the dual product 
$\langle \cdot , \cdot \rangle$ as follows
\begin{equation}
\label{eq:testedgeneral}
\begin{aligned}
\left(u(t), \varphi \right)  \, & = \; \left(u^h_0,  \varphi  \right) \; + \; \nu \int_0^t 
\left\langle \Delta  u(s),  \varphi \right\rangle ~\txtd s \; + \; \int_0^t \left( f(u(s)) 
, \varphi  \right) ~\txtd s\; \\
& + \;  \int_0^t \left( g (u(s)) , \varphi \right) ~\txtd W(s). 
\end{aligned}
\end{equation}
Due to~\eqref{eq:tracenoise}, we can interpret the stochastic integral term as 
 \begin{equation}
\begin{aligned}
 \int_0^t \Big( g (u(s)) , \varphi \Big) \txtd W(s) 
= \sum_{k=1}^{\infty} \int_0^t \Big( g_k (u(s)) , 
\varphi \Big)~\txtd W_k(s) = \int_0^t \Big( \varphi , g (u(s))  ~\txtd W(s) \Big)
\end{aligned}
\end{equation}
and relate the notation of the stochastic integral term in~\eqref{eq:energyitofinal} 
to the weak form~\eqref{eq:testedgeneral}.

\begin{lemma}
\label{lemma:boundedH1}
The discrete solution $u^h$ 
of~\eqref{eq:discstochnagumo} is uniformly bounded in $L^2(\Omega \times [0,T], 
H_0^1(\mathcal{D}))$, i.e.
\begin{equation}
\sup_h \E \left[ \int_0^T \|\nabla_R^- u^h(t)\|^2_{L^2(\mathcal{D})}  \txtd t  \, \right] 
\; < \; \infty.
\end{equation}
\end{lemma}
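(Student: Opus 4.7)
The plan is to begin from the energy identity provided by Lemma~\ref{lemma:energyito}, take expectations, and then invoke the discrete coercivity estimate (L1) in order to extract the $\|\nabla_R^- u^h\|^2$ contribution with a favourable sign. Concretely, taking $\E[\cdot]$ in~\eqref{eq:energyito} eliminates the It\^{o} martingale term, since the integrand $g(u^h)(u^h, \cdot)$ is square-integrable on $[0,T]\times\Omega$ by the linear growth assumption~\eqref{eq:sigmagrowth} together with the (still to be checked) boundedness of $\E[\|u^h(t)\|^2]$; the latter is justified \emph{a posteriori} via the usual stopping-time argument. After taking expectations, the identity reads
\begin{equation*}
\E[\|u^h(t)\|^2] \;=\; \|u^h(0)\|^2 \;+\; \E\!\int_0^t\!\Bigl(2\nu\langle\Delta_R^h u^h,u^h\rangle + 2(f(u^h),u^h) + \|g(u^h)\|^2\Bigr)\,\txtd s.
\end{equation*}

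Next, I would apply the coercivity estimate~\eqref{eq:discretecoercivitynoise}, multiplied by $2$, and drop the resulting nonnegative term $\|g(u^h)\|^2$ on the left, to obtain
\begin{equation*}
\E[\|u^h(t)\|^2] \;+\; 2\nu\,\E\!\int_0^t\|\nabla_R^- u^h(s)\|^2\,\txtd s \;\leq\; \|u^h(0)\|^2 \;+\; 2c_a\,\E\!\int_0^t\|u^h(s)\|^2\,\txtd s.
\end{equation*}
Dropping the (nonnegative) gradient term and applying Gronwall's inequality to $t\mapsto\E[\|u^h(t)\|^2]$ yields
\begin{equation*}
\sup_{t\in[0,T]}\E[\|u^h(t)\|^2] \;\leq\; \|u^h(0)\|^2\,e^{2c_a T}.
\end{equation*}
Plugging this uniform-in-$h$ bound back into the previous inequality and rearranging gives
\begin{equation*}
\E\!\int_0^T\!\|\nabla_R^- u^h(s)\|^2\,\txtd s \;\leq\; \frac{1}{2\nu}\Bigl(\|u^h(0)\|^2 \;+\; 2c_a T\|u^h(0)\|^2 e^{2c_a T}\Bigr).
\end{equation*}

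It remains to check that the right-hand side is bounded independently of $h$. Since $u_0\in C^4(\mathcal{D})$ is deterministic, the discrete initial datum $u^h(0)$ is the projection of $u_0$ onto $P_N H_0^1(\mathcal{D})\cong \mathbb{R}^N$, and hence $\|u^h(0)\|^2\leq \|u_0\|^2_{L^2(\mathcal{D})}$ uniformly in $h=1/N$. The coercivity constant $c_a=\tfrac{1}{3}(a^2-a+1)$ depends only on $a$, and the estimate~\eqref{eq:discretecoercivitynoise} holds with constants independent of $h$, $R$ and the weights $J(j)$, provided Assumption~\ref{bed:J} is in force. Taking the supremum over $h$ therefore preserves finiteness, proving the claim.

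The step I expect to require the most care is the justification that the expectation of the It\^{o} martingale vanishes: since we work with the cubic nonlinearity $f$ and only a one-sided Lipschitz bound, one should introduce stopping times $\tau_M:=\inf\{t:\|u^h(t)\|\geq M\}$, carry out the above argument on $[0,t\wedge\tau_M]$ where the stochastic integral is a true martingale, and then pass to the limit $M\to\infty$ using Fatou's lemma together with the Gronwall bound to conclude $\tau_M\to\infty$ almost surely. Once this localisation is in place, the rest of the proof is a direct combination of coercivity and Gronwall.
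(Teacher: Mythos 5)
Your argument is essentially identical to the paper's proof: take expectations in the energy identity of Lemma~\ref{lemma:energyito}, apply the discrete coercivity estimate~\eqref{eq:discretecoercivitynoise}, run Gronwall on $\E[\|u^h(t)\|^2]$, and feed the resulting uniform bound back in to isolate the gradient term. The only addition is your stopping-time justification for the vanishing of the stochastic integral (which the paper asserts without comment), and this is a welcome refinement rather than a different route.
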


\begin{proof}
We start with the energy equation \eqref{eq:energyito}. 
Taking the expectation, the stochastic integral is zero and we arrive at 
\begin{equation}
\label{eq:expectationstart}
\begin{aligned}
\E \left[\|u^h(t)\|^2 \right] -  \; \E \left[ \|u^h(0)\|^2 \right]  
&=   \; 2 \nu \E \left[ \int_0^t \left( \Delta_R^h u^h(s), u^h(s)\right) ~\txtd s \right]  \\ 
  & + \, 2 \; \E \left[ \int_0^t \left( f(u^h(s)), u^h(s)\right) ~\txtd s\right]  
	\;  + \E \left[ \int_0^t g(u^h(s))^2 ~\txtd s \right] 
\end{aligned}
\end{equation}
Abbreviate now the right hand side of \eqref{eq:expectationstart} by 
$RHS := 2 \nu \E \left[ \int_0^t \left( \Delta_R^h u^h(s), u^h(s)\right)~\txtd s \right]  
+  2\E \left[ \int_0^t \left( f(u^h(s)), u^h(s)\right) ~\txtd s\right]  
+ \E \left[ \int_0^t g(u^h(s))^2 ~\txtd s \right] $. We use the coercivity 
estimate~\eqref{eq:discretecoercivitynoise} to get that the right hand side 
of~\eqref{eq:expectationstart} satisfies
\begin{equation}
\label{eq:step11}
\begin{aligned}
RHS & \; \leq  - \; 2 \nu \E \left[  \int_0^t   \|\nabla_R^- u^h(s)\|^2 \, ~\txtd s \right]   
+ 2 (c_a + \nu) \E \left[  \int_0^t   \| u^h (s)\|^2 ~\txtd s \right] 
\end{aligned}
\end{equation}
which is, as the initial data is deterministic,
\begin{equation}
\label{eq.startgronwall}
\begin{aligned}
\E \left[ \|u^h(t)\|^2 \right]   +  2 \nu \E \left[  \int_0^t   \|\nabla_R^- u^h(s)\|^2 
\, \txtd s \right]  & \; \leq  \;  \|u_0\|^2 \\
&+ 2  (c_a + \nu) \E \left[  \int_0^t   \| u^h (s) \|^2 ~\txtd s \right].  
\end{aligned}
\end{equation}
Now we apply Gronwall's Lemma to $\E \left[ \|u^h(t)\|^2 \right]$ to get 
\begin{equation}
\label{eq.gronwallcalc1}
\E \left[ \|u^h(t)\|^2 \right] \; \leq \; \txte^{2  (c_a + \nu) t } \; \|u_0\|^2 .
\end{equation}
Furthermore, as the RHS is independent of $h$ for $t \in [0,T]$ we get 
\begin{equation}\label{eq.gronwallcalc2}
\sup_{0 \leq t \leq T} \E \left[ \|u^h(t)\|^2 \right] \leq c (a, \nu, T) \; \|u_0\|^2 \;  
\end{equation}
with a constant $c$ which is independent of $h$. Going back to~\eqref{eq.startgronwall}, 
we see that the term $\E \left[ \|u^h(t)\|^2 \right]$ on the left-hand side (LHS) is 
estimated against a constant by \eqref{eq.gronwallcalc2}, so we remain with the second 
term and get therefore its boundedness 
\begin{equation}
\label{eq.gronwallcalc3}
 \E \left[  \int_0^T   \|\nabla_R^- u^h(t)\|^2 \, \txtd t \,  \right] \; 
\leq  \; c (a, \nu, T, u_0) , 
\end{equation}
which is the desired estimate.
\end{proof}

Note that the constant in the Gronwall estimate grows exponentially with $t$, 
therefore \eqref{eq.gronwallcalc2} diverges for $T \to \infty$ but since we are
only going to work on finite time scales, this will not be relevant here. Moreover, 
we did not directly estimate the discrete gradient $ \|\nabla_R^- u^h(t)\|^2$, but 
we made use of the energy equation, which is a consequence of It\^{o}'s formula. 
Therefore, the exact range $R$ of the discrete stencil does not directly affect 
the a-priori estimates.

\begin{lemma}
\label{lemma:boundedL2}
The discrete solution $u^h$ of \eqref{eq:discstochnagumo} is 
$L^2(\Omega, C([0,T], L^2(\mathcal{D})))$ independently of $h$, i.e.,
\begin{equation*}
\E \left[ \; \sup_{t \leq T} \;  \|u^h(t)\|^2_{L^2(\mathcal{D})} \, \right]  
\; \leq \;  c (a, \nu, T, u_0)
\end{equation*}
\end{lemma}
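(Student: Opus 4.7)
The plan is to start from the It\^o energy equality \eqref{eq:energyito} of Lemma \ref{lemma:energyito}, apply the discrete coercivity estimate (L1) to control the deterministic drift contributions together with the noise quadratic variation term, and then move the supremum inside the expectation via a Burkholder--Davis--Gundy (BDG) plus Young absorption argument. The a priori bound \eqref{eq.gronwallcalc2} from Lemma \ref{lemma:boundedH1}, which provides $\sup_{t \leq T}\E[\|u^h(t)\|^2]\leq c$ uniformly in $h$, will be used at the very end to close the estimate without requiring a further Gronwall step.

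Concretely, I would first use \eqref{eq:discretecoercivitynoise} to rewrite \eqref{eq:energyito} as
\begin{equation*}
\|u^h(t)\|^2 \;\leq\; \|u_0\|^2 \;-\; 2\nu\!\int_0^t\!\|\nabla_R^- u^h(s)\|^2\,\txtd s \;+\; 2c_a\!\int_0^t\!\|u^h(s)\|^2\,\txtd s \;+\; 2M_t,
\end{equation*}
where $M_t := \int_0^t g(u^h(s))(u^h(s),\txtd W^h(s))$. The gradient integral is non-positive and can be dropped. Taking $\sup_{t\leq\tau}$ on both sides for arbitrary $\tau\leq T$ and then expectations produces
\begin{equation*}
\E\!\left[\sup_{t\leq\tau}\|u^h(t)\|^2\right] \;\leq\; \|u_0\|^2 \;+\; 2c_a\!\int_0^\tau\!\E[\|u^h(s)\|^2]\,\txtd s \;+\; 2\,\E\!\left[\sup_{t\leq\tau}|M_t|\right].
\end{equation*}

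For the martingale term, I would apply the BDG inequality in finite dimensions: since the quadratic variation of $M_t$ is bounded, via \eqref{eq:sigmagrowth} and the trace class property of $Q$, by a constant multiple of $\int_0^\tau \|g(u^h)\|^2\|u^h\|^2\,\txtd s \leq C\!\int_0^\tau(1+\|u^h\|^2)\|u^h\|^2\,\txtd s$, one obtains
\begin{equation*}
\E\!\left[\sup_{t\leq\tau}|M_t|\right] \;\leq\; C\,\E\!\left[\Bigl(\sup_{t\leq\tau}\|u^h(t)\|^2\Bigr)^{1/2}\Bigl(\int_0^\tau(1+\|u^h(s)\|^2)\,\txtd s\Bigr)^{1/2}\right].
\end{equation*}
Young's inequality with a small parameter $\varepsilon>0$ then yields $\;2\E[\sup_{t\leq\tau}|M_t|]\leq \varepsilon\,\E[\sup_{t\leq\tau}\|u^h(t)\|^2]+C_\varepsilon\bigl(T+\int_0^\tau\E[\|u^h(s)\|^2]\,\txtd s\bigr)$. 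Choosing $\varepsilon=1/2$ and absorbing into the left-hand side, the problem is reduced to bounding $\int_0^T\E[\|u^h\|^2]\,\txtd s$, which is immediate from \eqref{eq.gronwallcalc2}: the integrand is uniformly bounded in $s$ and $h$ by $c(a,\nu,T)\|u_0\|^2$, so the final constant $c(a,\nu,T,u_0)$ is $h$-independent as required.

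The main technical point to watch is that $M_t$ be shown to be a genuine martingale, not only a local one, before invoking BDG; a standard localization argument using stopping times $\sigma_n := \inf\{t: \|u^h(t)\|\geq n\}$ together with the deterministic $L^2$ bound on $u^h$ handles this cleanly, after which one passes to the limit $n\to\infty$ via monotone/Fatou arguments. Beyond this bookkeeping, the remaining subtlety is to verify that every constant produced (the BDG constant, the constant from $\|g(u)\|^2\leq c(1+\|u\|^2)$, and the trace $M_Q$) is independent of $h$; this is automatic because $g$ and $Q$ are prescribed at the continuum level and the discretization enters only through the projection $P_N$, whose operator norm is $1$.
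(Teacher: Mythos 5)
Your proposal is correct and follows essentially the same route as the paper: the It\^o energy equality from Lemma \ref{lemma:energyito}, the discrete coercivity bound, the Burkholder--Davis--Gundy inequality on the stochastic integral followed by a Young-type absorption of $\frac{1}{2}\E[\sup_{t\leq T}\|u^h(t)\|^2]$ into the left-hand side, and finally the $h$-uniform bound \eqref{eq.gronwallcalc2} from Lemma \ref{lemma:boundedH1} to close the estimate. Your additional remarks on localization of the martingale and on the $h$-independence of the constants are sensible refinements of the same argument rather than a different approach.
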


\begin{proof}
We start with the energy equation \eqref{eq:energyito} over which we take the 
supremum in $t$ and the expectation  
\begin{equation*}
\begin{aligned}
\E \left[ \sup_{ t \leq T} \|u^h(t)\|^2 \right] & \; =  \; \|u^h(0)\|^2\; + 
\; 2 \nu \E \left[ \int_0^T \left( \Delta_R^h u^h(s), u^h(s)\right) ~\txtd s \right] \\ 
  & + \, 2 \;\E \left[  \int_0^T \left( f(u^h(s)), u^h(s)\right) ~\txtd s \right] \;  
	+ \; 2 \E \left[ \sup_{ t \leq T} \int_0^t g(u^h(s)) \left( u^h(s), \txtd W^h(s)\right) \right]\\
  & + \, \E \left[ \int_0^T g(u^h(s))^2 ~\txtd s \right]. 
\end{aligned}
\end{equation*}
The noise term can be analyzed using the Burkholder-Davis-Gundy inequality
\begin{equation*}
\begin{aligned}
\E \left[  \sup_{t \leq T} \left|\int_0^t g(u^h(s)) \left( u^h(s), \txtd W^h(s)\right) 
\right| \right] \;
& \leq \;  c \; \E \left[ \left( \int_0^{T} \left( g(u^h(s)) , 
u^h(s)\right)^2_{L^2(\mathcal{D})} \txtd s \right)^{1/2} \right] \\
& \leq \;  c \; \E \left[ \sup_{t \leq T} \| u^h(t) \|  \left( \int_0^{T}  
g(u^h(s))^2~\txtd s \right)^{1/2}    \right]\\
& \leq \;  \frac{1}{2} \E \left[ \sup_{t \leq T} \| u^h(t) \|^2  \right] \; 
+ \;  \frac{c^2}{2} \E \left[  \int_0^{T}  g(u^h(s))^2~\txtd s \right].
\end{aligned}
\end{equation*}
Estimating the other terms by coercivity, we get 
\begin{equation*}
\begin{aligned}
 \frac{1}{2} \E \left[ \sup_{t \leq T} \| u^h(t) \|^2  \right]  \; 
&\leq  \; \|u_0 \|^2 \; - \; 2 \nu \E \left[  \int_0^T   
\|\nabla_R^- u^h(s)\|^2 \, ~\txtd s \right] \; + \;c(c_a, \nu)  
\E \left[ \int_0^{T}   \| u^h(s) \|^2 ~\txtd s \right]. \;
\end{aligned}
\end{equation*}
Notice now that we can estimate, thanks to the last lemma, 
in particular \eqref{eq.gronwallcalc3},
\begin{equation}
 \E \left[ \int_0^{T}   \| u^h(s) \|^2 ~\txtd s \right] \; \leq \; 
\E \left[  \int_0^T   \|\nabla_R^- u^h(s)\|^2 \, ~\txtd s \right] 
\leq c (a, \nu, T)
\end{equation}
and as $ \|u_0\|^2 \leq c$ by assumption, so
\begin{equation*}
\E \left[ \sup_{t \leq T} \|u^h(t)\|^2_{L^2(\mathcal{D})} \right]  
\leq \; c (a, \nu, T, u_0)
\end{equation*}
which means that $u^h$ is bounded in $L^2(\Omega, C([0,T], L^2(\mathcal{D})))$ 
independently of $h$. 
\end{proof}

Note that we used here again the estimate \eqref{eq.gronwallcalc2}, which 
comes from Gronwall's inequality, so this result holds only for finite $t$.

\subsection{Convergence and identification of the limit}
\label{sec:convergence}

We begin with the proof of a simple lower semicontinuity statement on 
$\E\left[ \| u(T)\|^2_{L^2}\right]$, which we will use in the proof of 
the convergence theorem, precisely in equation \eqref{eq:uselschere}.
 
\begin{lemma}
\label{lemma:lscnorm}
Let $u\in L^2\left( \Omega \times [0,T]; H_0^1(\mathcal{D})\right) 
\cap L^2\left(\Omega; L^{\infty}([0,T],L^2(\mathcal{D})\right)$. For $u^h(t) 
\longrightarrow u(t)$ weakly in $L^2(\Omega, L^2(\mathcal{D}))$ it holds that
 \begin{equation}\label{eq:lsc1}
 \E \left[ \|u(T)\|^2_{L^2} - \|u_0\|^2_{L^2}\right] \; \leq \; 
\liminf_{ h \to 0}  \E \left[ \|u^h(T)\|^2_{L^2} - \|u^h_0\|^2_{L^2}\right]
\end{equation}
\end{lemma}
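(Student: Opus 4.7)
The plan is to recognize this as a standard weak-lower-semicontinuity statement for the norm in the Hilbert space $\mathcal{K} := L^2(\Omega, L^2(\mathcal{D}))$ applied at the fixed time $t = T$, combined with a strong-convergence argument for the deterministic initial data. The inner product on $\mathcal{K}$ is $\langle u, v\rangle_{\mathcal{K}} = \E\left[(u,v)_{L^2(\mathcal{D})}\right]$, and the associated norm squared is precisely $\E[\|\cdot\|^2_{L^2(\mathcal{D})}]$, so the hypotheses place us exactly in the setting where the abstract Hilbert-space estimate $\|u\|_{\mathcal{K}} \leq \liminf_{h\to 0} \|u^h\|_{\mathcal{K}}$ applies.

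First I would fix $t = T$ and read the hypothesis $u^h(T) \rightharpoonup u(T)$ weakly in $\mathcal{K}$. This is legitimate because the assumption $u \in L^2(\Omega;L^\infty([0,T],L^2(\mathcal{D})))$ together with the a~priori bound from Lemma~\ref{lemma:boundedL2} ensures that both $u(T)$ and the family $\{u^h(T)\}_h$ are bona fide elements of $\mathcal{K}$ (after piecewise linear extension of $u^h$, as explained in Section~\ref{sec:notation}). From the standard fact that the norm of a Hilbert space is weakly sequentially lower semicontinuous, I immediately obtain
\begin{equation*}
\E\left[\|u(T)\|^2_{L^2}\right] \;\leq\; \liminf_{h \to 0} \E\left[\|u^h(T)\|^2_{L^2}\right].
\end{equation*}

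Second, for the initial data I use that $u_0$ is deterministic and that the discretization $u^h_0$ is obtained by evaluating a $C^4$ function on the grid and interpolating piecewise linearly, so $u^h_0 \to u_0$ strongly in $L^2(\mathcal{D})$ as $h \to 0$. In particular $\|u^h_0\|^2_{L^2} \to \|u_0\|^2_{L^2}$ as a numerical sequence, and since $u_0$ is deterministic $\E[\|u_0\|^2_{L^2}] = \|u_0\|^2_{L^2}$. Combining a convergent sequence with a $\liminf$ then yields
\begin{equation*}
\liminf_{h \to 0}\left(\E[\|u^h(T)\|^2_{L^2}] - \|u^h_0\|^2_{L^2}\right)
= \liminf_{h \to 0}\E[\|u^h(T)\|^2_{L^2}] - \|u_0\|^2_{L^2}
\;\geq\; \E\left[\|u(T)\|^2_{L^2} - \|u_0\|^2_{L^2}\right],
\end{equation*}
which is the claimed inequality \eqref{eq:lsc1}.

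Since the argument is essentially just the weak lower semicontinuity of a Hilbert norm, there is no serious obstacle. The only point requiring a little care is the book-keeping around the identification of the piecewise linear extension of $u^h(T)$ with an element of $\mathcal{K}$ and the verification that the initial data sequence converges strongly; both are already implicit in the setup of Section~\ref{sec:notation} and the hypothesis $u_0 \in C^4(\mathcal{D})$ from Proposition~\ref{prop:aprioriu}.
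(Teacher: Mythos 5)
Your proof is correct and follows essentially the same route as the paper: weak lower semicontinuity of the squared norm on the Hilbert space $L^2(\Omega,L^2(\mathcal{D}))$ at the fixed time $t=T$ (the paper phrases this via convexity of $w\mapsto\E[\|w\|^2_{L^2}]$, which amounts to the same thing), combined with strong convergence of the deterministic initial data so that the $\liminf$ splits cleanly. If anything, your explicit remark that the convergence of $\|u^h_0\|^2_{L^2}$ is what justifies separating the $\liminf$ is slightly more careful than the paper's corresponding step.
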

\begin{proof}
First, as $u^h  \rightharpoonup u$ in $L^2(\mathcal{D})$, the lower 
semicontinuity of the $L^2$-norm gives $ \| u(t)\|_{L^2} \leq \liminf_{h \to 0} 
\|u^h(t)\|_{L^2}$. As the mapping $u(t) \to \E\left[\|u(t)\|^2_{L^2}\right]$ 
is convex as a map from $L^2 (\Omega, L^2(\mathcal{D}))$ to $\mathbb{R}$, and by 
the same argument,  for any $t \in [0,T]$ also convex as a map from 
$L^2 (\Omega \times [0,T] ; L^2(\mathcal{D}))$ to $\mathbb{R}$, we get furthermore, 
\begin{equation}
\label{eq:utlsc}
\E\left[ \| u(T)\|^2_{L^2}\right] \; \leq \; \liminf_{h \to 0} 
\E\left[\|u^h(T)\|_{L^2}^2 \right].
\end{equation}
By strong convergence of the initial condition in $L^2(\mathcal{D})$, 
we have $u^h(0) = \sum_{i=1}^{\infty} (u_0, e_i) e_i \longrightarrow u_0$ and so
\begin{equation*}\label{eq:lsccalc}
\begin{aligned}
 \E \left[ \|u(T)\|^2_{L^2} - \|u_0\|^2_{L^2}\right] \; & =  
\; \E \left[ \|u(T)\|^2_{L^2}\right]  \;  -  \;  \E \left[\|u_0\|^2_{L^2}\right] \\
 & \; \leq \; \liminf_{ h \to 0}  \E \left[ \|u^h(T)\|^2_{L^2}\right]  
\;  -  \;  \liminf_{ h \to 0}   \|u^h_0\|^2_{L^2}
\end{aligned}
\end{equation*}
which finally leads to
\begin{equation}\label{eq:lsc}
 \E \left[ \|u(T)\|^2_{L^2} - \|u_0\|^2_{L^2}\right] \; 
\leq \; \liminf_{ h \to 0}  \E \left[ \|u^h(T)\|^2_{L^2} - \|u^h_0\|^2_{L^2}\right]
\end{equation}
finishing the proof.
\end{proof}

\begin{theorem}
\label{theo:convergence}
Let the initial data $u_0\in C^4(\mathcal{D})$ be deterministic and 
let Assumption \ref{bed:J} and the conditions \eqref{eq:sigmagrowth} 
and \eqref{eq:sigmalip} be satisfied. Then the solution $u^h$  of 
\eqref{eq:discstochnagumo} converges in $L^2(\Omega; L^2([0,T]; 
H_0^1(\mathcal{D}))) $ to the solution $u$ of \eqref{eq:stochnagumo} 
as $h \to 0$. 
\end{theorem}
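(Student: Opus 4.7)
The plan is to follow the classical Galerkin--monotone-operator scheme of Pardoux~\cite{Pardoux,LiuRoeckner1}, treating the stochastic LDE~\eqref{eq:discstochnagumo} as a generalised Galerkin projection of~\eqref{eq:stochnagumo} and exploiting the coercivity~\eqref{eq:discretecoercivitynoise} and monotonicity~\eqref{eq:discretemonotonicitynoise} which are uniform in $h$.

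First I would extract subsequential weak limits from the a priori bounds. Proposition~\ref{prop:aprioriu} provides uniform-in-$h$ bounds on $u^h$ in $L^2(\Omega; L^\infty([0,T]; L^2(\mathcal{D})))\cap L^2(\Omega\times[0,T]; H_0^1(\mathcal{D}))$; combined with the polynomial growth of $f$ and the linear growth~\eqref{eq:sigmagrowth} of $g$, this yields bounds for $\nu\Delta_R^h u^h + f(u^h)$ in $L^2(\Omega\times[0,T]; H^{-1}(\mathcal{D}))$ and for $g(u^h)$ in $L^2(\Omega\times[0,T]; \mathcal{H})$. Passing to a subsequence, we obtain weak limits $\bar u$, $F$, $G$ in the corresponding spaces. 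Since the initial datum is deterministic and $C^4$-smooth, $u^h(0)\to u_0$ strongly in $L^2(\mathcal{D})$, and passing to the limit in the weak formulation~\eqref{eq:tested1} tested against a fixed basis vector $e_k$---whose discrete Laplacian $\Delta_R^h e_k$ converges to $\Delta e_k$ in $H^{-1}(\mathcal{D})$ by Assumption~\ref{bed:J} and Taylor expansion---shows that $\bar u$ satisfies a variational equation with drift $F$ and diffusion $G$.

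The crux is to identify $F = \nu\Delta\bar u + f(\bar u)$ and $G = g(\bar u)$ via the \emph{monotonicity trick}. For an arbitrary adapted test process $v\in L^2(\Omega\times[0,T]; H_0^1)$ with a smooth It\^o decomposition, I would apply the identity~\eqref{eq:energyito} to $\txte^{-2c_a t}\|u^h(t)-v(t)\|^2$, take expectation, and use~\eqref{eq:discretemonotonicitynoise} to bound the drift contribution, arriving at an inequality of the schematic form
\begin{equation*}
\E\bigl[\txte^{-2c_a T}\|u^h(T)-v(T)\|^2\bigr] - \|u^h_0 - v_0\|^2 \leq \E\!\int_0^T\!\txte^{-2c_a s}\Bigl(2\langle \nu\Delta_R^h v + f(v) - \phi_v,\, u^h - v\rangle + \|g(v)-\sigma_v\|_{\mathcal{H}}^2\Bigr)\txtd s.
\end{equation*}
Taking $\liminf_{h\to 0}$, Lemma~\ref{lemma:lscnorm} (applied to the convex quadratic $\|u^h-v\|^2$) controls the left-hand side, while the right-hand side passes by weak convergence of $u^h$ together with the stencil consistency $\Delta_R^h v \to \Delta v$. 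Setting $v = \bar u - \epsilon w$ for $w\in L^2(\Omega\times[0,T]; H_0^1)$, dividing by $\epsilon>0$ and letting $\epsilon\downarrow 0$ via hemicontinuity of $A = \nu\Delta + f$ forces $F = \nu\Delta\bar u + f(\bar u)$ and $G = g(\bar u)$, so $\bar u$ solves~\eqref{eq:stochnagumo}. Uniqueness of variational solutions then upgrades subsequential convergence to full-sequence convergence and gives $\bar u = u$.

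Strong convergence in $L^2(\Omega; L^2([0,T]; H_0^1(\mathcal{D})))$ follows from the same machinery by choosing $v = u$: the lower-semicontinuity estimate becomes an equality in the limit by combining Lemma~\ref{lemma:lscnorm} with the continuous It\^o energy identity~\eqref{eq:continuousito} for $u$, and the coercive $-\nu\,\E\!\int_0^T\|\nabla_R^-(u^h - u)\|^2\,\txtd s$ contribution then forces the discrete $H_0^1$-seminorm of $u^h - u$ to vanish. The main obstacle is the identification step: one must simultaneously align the discrete monotonicity inequality with its continuum counterpart, control the stencil consistency error against generic adapted test processes (not merely against smooth basis functions), and apply Lemma~\ref{lemma:lscnorm} at precisely the right moment so that the $\liminf$ preserves the structure of the monotonicity inequality rather than merely dominating each term separately.
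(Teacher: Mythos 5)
Your proposal is correct and follows essentially the same route as the paper: a priori bounds from Proposition~\ref{prop:aprioriu} give weak subsequential limits of $u^h$, $\Delta_R^h u^h$, $f(u^h)$ and $g(u^h)$, and the drift and diffusion limits are then identified by the Pardoux monotonicity/Minty argument using Lemma~\ref{lemma:lscnorm} for the lower-semicontinuity step and hemicontinuity for the final passage $\theta\to 0$ (the paper applies the monotonicity inequality with constant $0$ directly rather than via your exponential weight $\txte^{-2c_a t}$, but this is cosmetic). If anything, your write-up is slightly more complete than the paper's, since you explicitly address the stencil consistency against test processes, the upgrade from subsequential to full-sequence convergence via uniqueness, and the final strengthening to strong $H_0^1$ convergence via the energy identity, all of which the paper's proof leaves implicit.
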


\begin{proof}
Recall that we can write the discrete problem in integral 
form~\eqref{eq:discstochnagumointegrated} in a suggestive way, using 
an orthonormal basis $\lbrace e_i\rbrace_{i=1 \ldots N}$ of $P_N H_0^1$, as 
\begin{equation}
\label{eq:tested}
\begin{aligned}
\left(u^h(T), e_i \right)  \, & = \; \left(u^h_0,e_i\right) \; 
+ \; \nu \int_0^T \left\langle \Delta_R^h u^h, e_i \right\rangle~\txtd t 
+ \; \int_0^T \left( f(u^h) ,e_i \right) ~\txtd t\; \\
& + \;  \int_0^T g (u^h) \left(e_i, \txtd W^h(t)\right) \qquad i = 1, \ldots, N.
\end{aligned}
\end{equation}
The above priori estimates in Proposition \ref{prop:aprioriu}, imply the 
boundedness of the sequence $\Delta_R^h u^h $ in $L^2(\Omega \times [0,T]; 
H^{-1}(\mathcal{D}))$ and the boundedness of the sequence $g (u^h)$ in 
$L^2(\Omega \times [0,T] ; \mathcal{H})$. 
Hence there exists a subsequence, which we do not relabel, such that
\begin{equation}
\label{eq:weakc}
\begin{aligned}
u^h \rightharpoonup & \; u \quad \textup{in } L^2(\Omega; L^2([0,T]; 
H_0^1(\mathcal{D}))) \cap L^2\left(\Omega; L^{\infty}([0,T],L^2(\mathcal{D})\right) \\
\Delta_R^h u^h  \rightharpoonup & 
\; \zeta_1 \quad \textup{in } L^2(\Omega \times [0,T] ; H^{-1}(\mathcal{D})) \\
f(u^h)  \rightharpoonup & 
\; \zeta_2 \quad \textup{in } L^2(\Omega \times [0,T] ; L^2(\mathcal{D})) \\
g(u^h) \rightharpoonup  &  \; \widetilde{g} 
\quad \textup{in }  L^2(\Omega \times [0,T] ; \mathcal{H})
\end{aligned}
\end{equation}
We pass to the weak limit in \eqref{eq:tested} and get that for all $t \geq 0$ 
\begin{equation}
\label{eq:weaklimit}
\begin{aligned}
\left(u(T),  e_i \right)  \, & = \; \left(u_0, e_i \right) \; 
+ \; \nu \int_0^T \left\langle \zeta_1(t),  e_i \right\rangle \txtd t \\ 
  &+  \; \int_0^T \left( \zeta_2(t) ,  e_i \right) ~\txtd t\;  
	+ \;  \int_0^T \widetilde{g}(t)  \left( e_i, d W(t) \right) \; \qquad i = 1, \ldots, N. 
\end{aligned}
\end{equation}
It remains to identify the weak limit objects in \eqref{eq:weaklimit} with the 
objects in the stochastic Nagumo equation. We set in the rest of the proof $\nu=1$ 
for convenience as it does not change the argument. We start with identifying 
$\widetilde{g} = g (u)$. For this, note first that, by using the monotonicity 
property \eqref{eq:monotonicity} with $c = 0$, we infer that for any 
$\varphi \in L^2(\Omega \times [0,T];H_0^1(\mathcal{D}))$ holds
\begin{equation}
\label{eq:passtothelimithere}
\E  \left[ \int_0^T \left\langle  \Delta_R^h u^h + f(u^h) -  \Delta_R^h \varphi 
- f(\varphi), u^h - \varphi \right\rangle \, \txtd t \right]  +  
\E \left[ \int_0^T   \| g (u^h) - g (\varphi) \|^2_{\mathcal{H}}  \, \txtd t \right]  
\; \leq 0 
\end{equation}
We can split the first term into four terms and use the positivitiy of 
$\E \left[ \int_0^T   \| g (u^h) - g (\varphi) \|^2_{\mathcal{H}}  \, \txtd t \right]$ 
to get
\begin{equation}
\label{eq:fourterms}
\begin{aligned}
\E  &\left[ \int_0^T \left\langle  \Delta_R^h u^h + f(u^h) -  
\Delta_R^h \varphi - f(\varphi), u^h - \varphi \right\rangle \, \txtd t \right]\\
&=  \E  \left[ \int_0^T \left\langle \Delta_R^h u^h + f(u^h), 
u^h\right\rangle ~\txtd t \right] \;
+ \; \E  \left[  \int_0^T \left\langle\Delta_R^h \varphi + f(\varphi), 
\varphi \right\rangle ~\txtd t \right] \\
\; & - \;\E  \left[  \int_0^T \left\langle\Delta_R^h u^h + f(u^h), \varphi 
\right\rangle ~\txtd t \right] \;
- \; \E  \left[  \int_0^T \left\langle\Delta_R^h \varphi + f(\varphi), u^h
 \right\rangle ~\txtd t \right] \;  \leq  \; 0 .
\end{aligned}
\end{equation}
By weak convergence, 
\begin{equation}
\label{eq:fourterms2}
\begin{aligned}
 &\int_0^T \left\langle\Delta_R^h \varphi + f(\varphi), \varphi 
\right\rangle ~\txtd t \;  \longrightarrow \; \int_0^T \left\langle\Delta 
\varphi + f(\varphi), \varphi \right\rangle ~\txtd t  \\
& \int_0^T \left\langle\Delta_R^h u^h + f(u^h), \varphi \right\rangle ~\txtd t 
\;  \longrightarrow \; \int_0^T \left\langle \zeta_1(t) + \zeta_2(t), 
\varphi \right\rangle ~\txtd t \; \\
& \int_0^T \left\langle\Delta_R^h \varphi + f(\varphi), u^h \right\rangle 
~\txtd t  \; \longrightarrow  \; \int_0^T \left\langle\Delta \varphi 
+ f(\varphi), u \right\rangle ~\txtd t \\
& \int_0^T   \Big( g (u^h),  g (\varphi) \Big)_{\mathcal{H}}  \, ~\txtd t 
\longrightarrow  \; \int_0^T   \Big( \tilde{g}(t),  g (\varphi) \Big)_{\mathcal{H}}  \, 
~\txtd t
\end{aligned}
\end{equation}
so the last three terms in \eqref{eq:fourterms} pass to the limit and preserve 
the sign in \eqref{eq:passtothelimithere}. For the first term of \eqref{eq:fourterms}, 
we employ that by semicontinuity of the norm and Lemma \ref{lemma:lscnorm}, we can 
relate solutions $u^h$ to \eqref{eq:discstochnagumo} with solutions $u$ to 
\eqref{eq:stochnagumo} as follows
\begin{equation}
\label{eq:uselschere}
\begin{aligned}
\E &\left[ \int_0^T \left\langle \Delta u (t), u(t)\right\rangle ~\txtd t \right] 
   + \; \E \left[ \int_0^T \left( f(u(t)), u(t)\right) ~\txtd t \right]  \;  
	+ \E \left[ \int_0^T g(u(t))^2 ~\txtd t \right] \\
\leq &\; \liminf_{h \to 0} \E \left[ \int_0^T \left\langle \Delta_R^h u^h(t), 
u^h(t)\right\rangle ~\txtd t \right]  \; + \;  \liminf_{h \to 0} \E \left[  
\int_0^T \left( f(u^h(t)), u^h(t)\right) ~\txtd t \right]  \\ 
+ & \; \liminf_{h \to 0} \E  \left[ \int_0^T g(u^h(t))^2 ~\txtd t \right] .
\end{aligned}
\end{equation}
Consequently also for the first term in \eqref{eq:fourterms} the sign is 
preserved in the limit. Passing to the limit in \eqref{eq:passtothelimithere}, we get
 \begin{equation}
\label{eq:identifynoise}
 \E  \left[ \int_0^T \left\langle  \zeta_1 + \zeta_2 -  \Delta \varphi 
- f(\varphi), u - \varphi \right\rangle ~\txtd t \right]  
+ \E \left[ \int_0^T   \| \widetilde{g} - g (u) \|^2_{\mathcal{H}}  ~\txtd t \right]  
\; \leq \; 0 .
 \end{equation}
Choosing $u = \varphi$ in \eqref{eq:identifynoise}, we deduce $\widetilde{g} = g (u)$.

It remains to identify the limit objects $\zeta_1$ and $\zeta_2$ to prove that 
$ u := \lim_{h \to 0} u^h$ is indeed a solution to the stochastic Nagumo equation.
To this aim, notice first that \eqref{eq:identifynoise} implies 
\begin{equation}
\label{eq:identifyoperators}
 \E  \left[ \int_0^T \left\langle  \zeta_1(t) + \zeta_2(t) -  
\Delta \varphi(t) - f(\varphi(t)), u(t) - \varphi(t) \right\rangle ~\txtd t 
\right]   \; \leq  \; 0 .
\end{equation}
Now we take $\theta > 0$ and define another test function $w$ via 
\begin{equation}
  \theta w(t) = u(t) - \varphi(t)
\end{equation}
with $ \varphi(t)$ the test function used in~\eqref{eq:identifyoperators}. 
As $w$ is an admissible test function in $L^2\left( \Omega \times [0,T]; 
H_0^1(\mathcal{D})\right)$, we can employ it in \eqref{eq:identifyoperators} 
instead of $\varphi$. We obtain
\begin{equation}
\label{eq:limitcalc1}
\begin{aligned}
  \E \left[\int_0^T \left\langle \zeta_1(t) + \zeta_2(t) -\Delta \left(u(t) 
	- \theta w(t)\right) -  f\left(u(t) - \theta w(t)\right), w(t)\right\rangle 
	~\txtd t \right] \leq 0 \; 
\end{aligned}
\end{equation}
As $\theta \mapsto  \langle \Delta (u - \theta w), w\rangle $ and $\theta \mapsto  
\langle f (u - \theta w), w\rangle $ are continuous from $\mathbb{R} \to \mathbb{R}$, it is 
admissible to pass to the limit $\theta \to 0$ and we reach
\begin{equation}
\label{eq:limitcalc2}
\E \left[\int_0^T \left\langle \zeta_1(t) + \zeta_2(t) - \Delta u(t) -  f(u(t)) , 
w(t)\right\rangle ~\txtd t \right] \leq 0 \qquad \textup{ for any } w \in L^2\left( 
\Omega \times [0,T]; H_0^1(\mathcal{D})\right).
\end{equation}
Since $w$ is arbitrary, the left hand side must vanish, hence $ \zeta _1 + \zeta_2  
= \Delta u + f(u) $. Setting now $w=u$ we identify $\zeta_2 = f(u)$. Plugging this 
result into \eqref{eq:identifyoperators} gives $\zeta_1 = \Delta u$.
\end{proof}

\subsection{The cut-off error}
\label{ssec:cutoff}

The above Theorem \ref{theo:convergence}  dealt with the error between the solutions to 
the lattice and the continuum model in a bounded interval  $\mathcal{D}=[-L,L]$. In this 
section we investigate the error that we make when truncating a solution living on the whole 
real line. The idea is to select $L$ large enough
so that a travelling front is contained in $\cD$ for the time of interest. Then the error outside
of $\cD$ is small as any classical Nagumo front decays exponentially near the 
two endstates.   

We start with some notation. To compare a solution to the Nagumo PDE on the real line~\eqref{eq:nagumo} 
with a solution $v$ of the finite-domain Nagumo PDE  \eqref{eq:nagumoPDED}, we set, as always in this 
work, the domain to the symmetric interval $[-L,L]$ and extend $v$ on  $\mathbb{R} \setminus \cD$ by 
extending suitable Dirichlet boundary conditions, i.e., we denote by $v$ now the solution to
\begin{equation} \label{eq:nagumohintencut}
\begin{aligned}
\partial_t v  &= \nu \partial_{x}^2 v + f(v) \qquad (t,x) \in \mathbb{R}_+ \times \mathcal{D}\\
 v(x, t) &= 0 \qquad \textup{for all} x \in (-\infty , -L] \\
 v(x,t) &= 1\qquad \textup{for all} x \in (-\infty , -L] \\.
\end{aligned}
\end{equation} 
In the same way, we extend a solution $u$ to \eqref{eq:nagumoSPDED} to $\mathbb{R} \setminus \cD$ 
via Dirichlet boundary conditions 
\begin{equation}
\label{eq:stochnagumohintencut}\tag{{SNag$\mathcal{D}$}}
\begin{aligned}
  \txtd u(t) &= [ \nu \partial_{x}^2 u(t) + f(u(t))] \txtd t + g(u(t))~\txtd 
	W(t) \qquad \qquad \textup{ on } \mathcal{D} \times [0,T]\\
	 u(-x, t) &= 0 \qquad \textup{for all} x \in (-\infty , -L] \\\\
 u(x,t) &=1 \qquad \textup{for all} x \in (-\infty , -L] \\.
\end{aligned}
\end{equation}
Analogously, we extend the solution $u^h$ to \eqref{eq:discstochnagumo}, which  is defined per 
definition only on $\mathcal{D} = [-L,L]$, by a constant  $u^h \equiv 1$ on $[L, \infty)$ and 
$u^h \equiv 0$ on $[-\infty, -L)$.  

\begin{proposition}
\label{prop:errorcontrol}
Assume $u_0 = V^{{\textnormal{TW}}}(\cdot-ct)=V_0$. Let $u^h$ 
solve \eqref{eq:discstochnagumo} and $V$ solve \eqref{eq:nagumo}.   
Then, given any fixed $T>0$ and $\epsilon>0$, there exists sufficiently 
small non-vanishing noise (in the 
sense that  $\|Q\|_{\mathcal{H}}^2 \leq \tilde{\epsilon}$), $h>0$, and $L>0$ 
such that for all $t \leq T$
\begin{equation}\label{eq:cutofferror}
  \E \left[ \| u^h(t) - V(t) \|_{L^2(\mathbb{R})}^2\right] \leq \epsilon .
\end{equation}
\end{proposition}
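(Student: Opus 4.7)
The plan is to introduce two intermediate objects and decompose by triangle inequality. Let $v$ denote the solution of \eqref{eq:nagumoPDED} on $\mathcal{D}=[-L,L]$ with Dirichlet data $0$ at $-L$ and $1$ at $+L$, extended by these values on $\mathbb{R}\setminus\mathcal{D}$, and let $u$ be the analogous extension of the solution of \eqref{eq:stochnagumo}. For any $t \leq T$,
\[
\|u^h(t) - V(t)\|_{L^2(\mathbb{R})} \leq \|u^h(t) - u(t)\|_{L^2(\mathbb{R})} + \|u(t) - v(t)\|_{L^2(\mathbb{R})} + \|v(t) - V(t)\|_{L^2(\mathbb{R})}.
\]
I would then control the three pieces by the three sources of error --- discretization, noise, and truncation --- and balance the constants $h$, $\|Q\|_{\mathcal{H}}$, and $L$ at the end.

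First, for the cut-off term $\|v-V\|$, I would exploit the exponential decay of the Nagumo front: $V^{{\textnormal{TW}}}(\zeta)$ approaches $0$ (resp.~$1$) exponentially as $\zeta \to -\infty$ (resp.~$+\infty$). Because $V(t,\cdot) = V^{{\textnormal{TW}}}(\cdot-ct)$ travels with finite speed $|c|$, choosing $L \gg |c|T$ keeps the front well inside $\mathcal{D}$ for $t \in [0,T]$, and the Dirichlet data $0,1$ agree with $V$ at $\pm L$ up to an error of order $e^{-\alpha L}$ for some $\alpha>0$. An energy estimate on $w := V - v$, using that $f$ satisfies the one-sided Lipschitz bound from Lemma \ref{lemma:liploc} and that $w$ solves a linear inhomogeneous deterministic parabolic equation with exponentially small boundary data, yields $\sup_{t\leq T}\|w(t)\|_{L^2(\mathbb{R})}^2 \leq C(T) e^{-\alpha L}$, which can be driven below $\epsilon/3$ by taking $L$ large.

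Next, for the stochastic term $\|u-v\|$, I would shift both $u$ and $v$ by a smooth lift $\phi$ of their common Dirichlet data so that the differences live in $H_0^1(\mathcal{D})$, and then apply It\^o's formula to $\|u(t)-v(t)\|^2$. The drift contribution $\langle \nu\Delta(u-v) + f(u)-f(v), u-v\rangle$ is handled by the monotonicity inequality \eqref{eq:monotonicity}, which gives an $O\bigl(\int_0^t \|u-v\|^2 \,\txtd s\bigr)$ bound after absorbing the $\|g(u)-g(v)\|_{\mathcal{H}}^2$ term. The It\^o correction is bounded by the Hilbert-Schmidt norm $\|g(u)Q^{1/2}\|_{\mathcal{H}}^2 \leq C\|Q\|_{\mathcal{H}}(1+\|u\|^2)$, so after taking expectations and using the a priori bound of Lemma \ref{lemma:boundedL2}, Gronwall's inequality yields $\E\|u(t)-v(t)\|^2 \leq C(T,u_0)\|Q\|_{\mathcal{H}}$. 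The small-noise assumption $\|Q\|_{\mathcal{H}}\leq \tilde\epsilon$ then controls this term below $\epsilon/3$. The discretization term $\|u^h-u\|$ is handled directly by Theorem \ref{theo:convergence}: the strong convergence in $L^2(\Omega;L^2([0,T]; H_0^1(\mathcal{D})))$, combined with the $L^2(\Omega;L^\infty([0,T];L^2))$ bound of Lemma \ref{lemma:boundedL2}, gives a pointwise-in-$t$ smallness of $\E\|u^h(t)-u(t)\|^2$ (up to a Chebyshev-type argument on the $t$-variable) for $h$ sufficiently small.

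The main obstacle will be the stochastic term: one has to handle cleanly the inhomogeneous Dirichlet extensions so that the coercivity and monotonicity estimates of Section \ref{sec:notation} --- stated for $H_0^1(\mathcal{D})$ --- apply after the shift, and one has to make the dependence on the covariance operator $Q$ (through $\|Q\|_{\mathcal{H}}$ in the It\^o correction) explicit enough so that the ``sufficiently small noise'' assumption genuinely yields smallness uniformly in $t \in [0,T]$. The cut-off and discretization steps, in contrast, are largely a matter of assembling already-established results.
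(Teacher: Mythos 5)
Your decomposition is exactly the one the paper uses: the same triangle inequality into discretization, noise, and truncation errors, with the truncation piece handled by exponential decay of the front (Lemma~\ref{lemma:errorcontroldet}), the noise piece by a Gronwall argument yielding a bound proportional to $\|Q\|_{\mathcal{H}}^2$ (Lemma~\ref{lemma:errorcontroldetstoch}), and the discretization piece by Theorem~\ref{theo:convergence}. The only real divergence is in the small-noise estimate, where you apply It\^o's formula to $\|u-v\|^2$ and invoke the monotonicity inequality~\eqref{eq:monotonicity}, whereas the paper switches to the mild formulation and uses the semigroup together with It\^o's isometry; both routes are standard and give the same $O(\|Q\|_{\mathcal{H}}^2)$ conclusion, and your variational version is arguably more consistent with the monotone-operator framework used elsewhere in the paper, at the price of the boundary-lift bookkeeping you already flag.
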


\begin{proof}
We want to control the cut-off error for terms of the form 
\benn
\E \left[ \| u^h(t) - V(t) \|_{L^2(\mathbb{R})}^2\right].
\eenn
The strategy is to split the cut-off error into several parts, making use 
of the deterministic solutions $v$ and $V$. 

Applying the triangle inequality yields
\begin{equation}
\label{eq:startsplit}
\begin{aligned}
\E \left[ \| u^h(t) - u(t) \|_{L^2(\mathbb{R})}^2\right] 
\leq&  \; \E \left[ \| u^h(t) - u(t) \|_{L^2(\mathcal{D})}^2\right] + 
\E \left[ \| u(t) - v(t) \|_{L^2(\mathcal{D})}^2\right]  \\
&+ \; \| v(t) -  V(t) \|_{L^2(\mathbb{R})},
\end{aligned}
\end{equation}
where employed that, by definition of the constant continuations in \eqref{eq:nagumohintencut} 
and \eqref{eq:stochnagumohintencut} satisfy
\benn
\E \left[ \| u^h(t) - u(t) \|_{L^2(\mathbb{R} \setminus \mathcal{D})}^2\right] 
= \E \left[ \| u(t) - v(t) \|_{L^2(\mathbb{R} \setminus \mathcal{D})}^2\right] = 0.
\eenn
The control of $ \| v(t) -  V(t) \|_{L^2(\mathbb{R})}$ is established in 
Lemma \ref{lemma:errorcontroldet} below. Lemma \ref{lemma:errorcontroldetstoch} estimates 
$\E \left[ \| u(t) - v(t) \|_{L^2(\mathcal{D})}^2\right] \leq \epsilon $. This 
yields the desired result, equation \eqref{eq:cutofferror}.
\end{proof}

To establish the relevant auxiliary results, we start with the 
truncation error for the deterministic equation.

\begin{lemma}
\label{lemma:errorcontroldet} 
Let $V$ solve the Nagumo PDE on the real line~\eqref{eq:nagumo} and let 
$V^{{\textnormal{TW}}}$ be the deterministic travelling front solution to \eqref{eq:nagumo}.   
Let $v$ be the solution to  \eqref{eq:nagumohintencut}.
Let the initial data $v_0$ be the truncation of $V^{{\textnormal{TW}}}$. Then, for all 
$t < T$, where $T$ depends on the size of the domain and the speed of the wave, we have
\begin{equation}
 \| V(t) - v(t) \|_{L^2(\mathbb{R})} \leq \epsilon.
\end{equation}
\end{lemma}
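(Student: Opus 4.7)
The plan is to exploit the fact that $V(t,x)=V^{\textnormal{TW}}(x-ct)$ is the exact solution of~\eqref{eq:nagumo} with initial data $V^{\textnormal{TW}}$, so all the error comes from truncating to the bounded domain $\mathcal{D}=[-L,L]$. I would split
\[
\|V(t)-v(t)\|_{L^2(\mathbb{R})}^2 \;=\; \|V(t)-v(t)\|_{L^2(\mathcal{D})}^2 \;+\; \int_{-\infty}^{-L}\!V^{\textnormal{TW}}(x-ct)^2\,\txtd x \;+\; \int_{L}^{\infty}\!\bigl(1-V^{\textnormal{TW}}(x-ct)\bigr)^2\txtd x,
\]
using that $v\equiv 0$ on $(-\infty,-L]$ and $v\equiv 1$ on $[L,\infty)$. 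The two tail integrals are controlled by the classical exponential decay of the Nagumo front: there exist $C,\kappa>0$ with $V^{\textnormal{TW}}(\zeta)\le Ce^{\kappa\zeta}$ for $\zeta\le 0$ and $1-V^{\textnormal{TW}}(\zeta)\le Ce^{-\kappa\zeta}$ for $\zeta\ge 0$. Since $c<0$ is fixed, for $x\le -L$ and $t\le T$ the shifted argument satisfies $x-ct\le -L+|c|T$, so the left tail is bounded by $(C^2/2\kappa)e^{-2\kappa(L-|c|T)}$; the right tail is similarly bounded by $(C^2/2\kappa)e^{-2\kappa L}$. Taking $L$ large enough relative to $|c|T$ and $\epsilon$ makes each tail smaller than $\epsilon/3$.

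For the interior term I would absorb the boundary mismatch into a correction function. Pick a smooth $\eta(t,x)$ on $\mathcal{D}$ satisfying $\eta(t,-L)=-V^{\textnormal{TW}}(-L-ct)$ and $\eta(t,L)=1-V^{\textnormal{TW}}(L-ct)$, supported near $\partial\mathcal{D}$ and chosen so that $\eta,\partial_t\eta,\partial_x^2\eta$ are uniformly $O(e^{-\kappa(L-|c|T)})$ on $[0,T]\times\mathcal{D}$ (for example, a linear combination of two localized bump profiles pinned at the endpoints). Then $\tilde w:=v-V-\eta$ has homogeneous Dirichlet data on $\partial\mathcal{D}$ and solves
\[
\partial_t\tilde w \;=\; \nu\partial_x^2\tilde w \;+\; \bigl[f(v)-f(V)\bigr] \;+\; \nu\partial_x^2\eta \;-\; \partial_t\eta.
\]
Testing with $\tilde w$, using the invariance $v,V\in[0,1]$ to get a uniform Lipschitz constant $L_f$ for $f$ on the relevant range, and applying Young's inequality yields
\[
\tfrac{1}{2}\tfrac{\txtd}{\txtd t}\|\tilde w\|^2 \;+\; \nu\|\partial_x\tilde w\|^2 \;\le\; C_1\|\tilde w\|^2 \;+\; C_2\bigl(\|\eta\|^2+\|\partial_t\eta\|^2+\|\partial_x^2\eta\|^2\bigr),
\]
with constants depending only on $L_f$ and $\nu$. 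Gronwall's inequality on $[0,T]$ then bounds $\|\tilde w(t)\|^2$ by a constant times $\|\tilde w(0)\|^2$ plus the exponentially small forcing. Since $v_0$ is the truncation of $V^{\textnormal{TW}}$, we have $\tilde w(0)=-\eta(0)$, itself $O(e^{-\kappa L})$ in $L^2(\mathcal{D})$. The triangle inequality $\|v-V\|_{L^2(\mathcal{D})}\le\|\tilde w\|+\|\eta\|$ then provides an $\epsilon/3$ bound for the interior contribution, and combining with the two tail estimates gives the claim.

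The main technical hurdle is the inhomogeneous Dirichlet data at $\pm L$: one cannot apply the energy identity directly to $v-V$ because integration by parts would leave uncontrolled boundary-flux terms $[(v-V)\partial_x(v-V)]_{-L}^{L}$. The correction-function device circumvents this at the price of an inhomogeneity in the PDE for $\tilde w$ that is itself exponentially small in $L$, which Gronwall absorbs over the fixed time horizon. This step is also where the restriction "$T$ depends on the size of the domain and the speed of the wave" becomes explicit: we need $L-|c|T$ large enough for the translates of $V^{\textnormal{TW}}$ to remain safely inside the exponential-decay regions throughout $[0,T]$.
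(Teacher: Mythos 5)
Your proposal is correct, and for the interior estimate it takes a genuinely different route from the paper. Both arguments begin the same way: the tails outside $\mathcal{D}$ are controlled by the exponential convergence of the Nagumo front to its hyperbolic endstates $0$ and $1$, with the requirement that $L-|c|T$ be large enough that the interface stays away from $\partial\mathcal{D}$ on $[0,T]$ (this is exactly the paper's condition \eqref{eq:lengthofl} and the stated dependence of $T$ on $L$ and $c$). They diverge on $\mathcal{D}$ itself: the paper estimates the mismatch pointwise in $\delta$-neighbourhoods of $\pm L$ using spatial regularity of $V$ and $v$, and then argues that in the interior $\partial_t(V-v)$ vanishes (or stays small) for short times, concluding by continuity. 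Your approach instead lifts the exponentially small inhomogeneous Dirichlet data by a correction function $\eta$, derives a parabolic equation for $\tilde w=v-V-\eta$ with homogeneous boundary data and exponentially small forcing, and closes with an energy estimate, the uniform Lipschitz bound for $f$ on $[0,1]$, and Gronwall. This buys you a fully quantitative bound that is uniform on $[0,T]$ and correctly accounts for the fact that, for a parabolic equation, the boundary mismatch does influence the interior instantaneously (so the difference is not literally zero there for $t>0$, only exponentially small in $L$); the paper's continuity argument glosses over precisely this point, and your identification of the uncontrolled boundary-flux term $[(v-V)\partial_x(v-V)]_{-L}^{L}$ as the obstacle that $\eta$ removes is exactly the right technical observation. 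The price is the need for the invariance $v,V\in[0,1]$ (or some a priori $L^\infty$ bound) to get a uniform Lipschitz constant for the cubic $f$, which the paper's more local, pointwise argument does not invoke explicitly but which is standard for Nagumo. Your version is the more robust of the two.
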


\begin{proof}
We focus on such $v$ which are close to travelling front solutions, i.e., which 
satisfy suitable boundary conditions. So we always formally extend $v$ to $\mathbb{R}$ by setting 
$v(x,t) = 1$ for all $x \in [L, \infty)$ and $v(x,t) = 0$ for all $x\in (-\infty , -L]$. 
See equation \eqref{eq:lengthofl} for the choice of $L$.

We start both equations with the same initial data $v_0$, which forms a traveling wave 
with a front near zero. Due to the boundary conditions at infinity, we know that 
$V(-\infty,t) = 0$ and $V(\infty,t) = 1$. By classical theory, see 
e.g.~\cite{AronsonWeinberger}, $0$ and $1$ are hyperbolic stable steady endstates 
of \eqref{eq:nagumo}. Therefore, the decay of the traveling waves is exponential as 
it approaches the steady states, in other words, there  exists $\e$ such that 
$\txte^{\e x} \cdot V(x,t) \to 0$  for $x \to -\infty$, and analogously for 
$x \to + \infty$ and decay towards $V\equiv 1$. Therefore, as long as the transition 
part of the traveling wave is far away from the boundary of $\mathcal{D}$, 
\begin{equation}
\label{eq:lengthofl}
\exists L_0 \in \mathbb{R} \; : \forall L \geq L_0 \; : \quad
 \| V(x,t) - 1 \|_{L^{2}((L, \infty))} \leq \epsilon \quad , \qquad  
\| V(x,t) - 0 \|_{L^{2}((-\infty,-L))} \leq \epsilon .
\end{equation}
Consequently, by the boundary conditions of the  Nagumo 
PDE~\eqref{eq:nagumohintencut} on the interval, we conclude that up to a time $T_0$, when the transition 
part of the traveling waves has not yet reached the boundary of the interval $\mathcal{D}$,
\begin{equation}\label{eq:erroratinfinity}
 \| V(x,t) -  v (x,t) \|_{L^2( \mathbb{R} \setminus [-L - \delta , L + \delta])} \leq 2 \epsilon .
\end{equation}
Second, we investigate the error at the boundary of $\mathcal{D}$. First, we look at the 
positive boundary point of $\mathcal{D}$, i.e. the point $L \in \mathbb{R}$. In a 
neighborhood $B_{\delta}(L)$ we have $f(1-\delta) = (1-\delta-a)(1- \delta) \delta 
\leq  \delta  $ and $V(x,t)$ is almost constant so by spatial regularity, we have 
$\max_x \partial_{x}^2 V(x,t) \leq \tilde{\epsilon}$, and as $V \equiv 1$ in 
$[L , L+\delta]$. This gives
\begin{equation}
\label{eq:deltaumgebungcalc}
\begin{aligned}
 \| V(x,t) - v(x,t) \|_{L^2(B_{\delta}(L))}^2 =&  \int_L^{L+\delta} \left( 
\nu \partial_{x}^2 V(x,t) + f(V(x,t))  - 0 \right)^2 ~\txtd x \\
 &+ \int_{L-\delta}^L \left( \nu \partial_{x}^2 V(x,t) + f(V(x,t)) -  
\nu \partial_{x}^2 v(x,t) - f(v(x,t)) \right)^2~ \txtd x \\
\leq& \; \int_L^{L+\delta} \left( \nu \max_{x \in [L, L + \delta] } 
\partial_{x}^2 V(x,t) + \max_{x \in [L - \delta , L] } f(V(x,t))  
- 0 \right)^2 ~\txtd x \\
&+ \;\delta \cdot \left( \nu  \max_{x \in [L - \delta , L] } 
\left( \partial_{x}^2 V(x,t)  - \partial_{x}^2 v(x,t) \right)  +  
\max_{x \in [L - \delta , L]} (f(V) - f(v)) \right)^2\\
\leq& \delta \nu^2 \tilde{\epsilon}^2 + \delta^3 + 
\; 3 \delta \nu^2 \left( \max_{x \in [L - \delta , L] }( \partial_{x}^2 
V(x,t)  - \partial_{x}^2 v(x,t)) \right)^2  \; + \; 3 \delta^2   \\
 \leq& \; \epsilon
 \end{aligned}
\end{equation}
where we employed the regularity in space of solutions to \eqref{eq:nagumo} 
and \eqref{eq:nagumohintencut} again to infer that 
$ \max_{x \in [L - \delta , L] }( \partial_{x}^2 V(x,t)  - \partial_{x}^2 v(x,t)) 
\leq c$ is a finite quantity. Analogous reasoning holds for the $\delta$-neighbourhood 
around $-L$. 
Therefore
\begin{equation}
\label{eq:deltaumgebungv}
 \| V(x,t) - v(x,t) \|_{L^2(B_{\delta}(L))} \leq \epsilon \quad ,\quad  
\| V(x,t) - v(x,t) \|_{L^2(B_{\delta}(-L))} \leq \epsilon .
\end{equation}
Third, we look at the difference between the two solutions in the interior 
of the interval $\mathcal{D}$,  $\| V(x,t) -  v (x,t) \|_{L^2([-L + \delta , 
L - \delta])}$. By construction, this difference is zero at time $t=0$, 
as we started with the same wave as initial condition. 
Moreover, as
\begin{equation}
\label{eq:interiorcalc}
\begin{aligned}
 \| \partial_t V(x,t) - \partial_t v(x,t) \|_{L^2([-L + \delta , L - \delta])}^2 
 =& \int_{-L+\delta}^{L-\delta} \left( \nu \partial_{x}^2 V(x,t) + f(V(x,t)) -  
\nu \partial_{x}^2 v(x,t) - f(v(x,t)) \right)^2~ \txtd x \\
 \leq& \; 3 \nu \int_{-L+\delta}^{L-\delta} \left(\partial_{x}^2 (V(x,t) - 
v(x,t))\right)^2~ \txtd x \\
 &+ \; 3\int_{-L+\delta}^{L-\delta} \left( f(V(x,t)) -  f(v(x,t)) \right)^2~\txtd x \\
 \end{aligned}
\end{equation}
so $ \| \partial_t (V(x,t) -  v(x,t)) \|_{L^2([-L + \delta , L - \delta])} =0 $ for 
times $t \leq T_0$, i.e., for all $t$ small enough. As we have already controlled the 
error at the boundary by $\epsilon$, by continuity of the solutions $V$ and $v$, 
we can conclude $ \| \partial_t (V(x,t) -  v(x,t)) \|_{L^2([-L + \delta , L - \delta])} 
\leq \epsilon $. Using~\eqref{eq:erroratinfinity}, \eqref{eq:deltaumgebungv} and 
\eqref{eq:interiorcalc} finishes the proof.
\end{proof}

\subsection{Small noise estimate}\label{ssec:smallnoise}
To complete the proof of Proposition \ref{prop:errorcontrol}, it remains to estimate 
the remaining term in \eqref{eq:startsplit}, namely 
$\E \left[ \| u(t) - v(t) \|_{L^2(\mathcal{D})}\right]$. This term
quantifies the error between 
the deterministic solution on a bounded interval for~\eqref{eq:nagumohintencut} and the stochastic solution
on a bounded interval for~\eqref{eq:stochnagumohintencut}. 

\begin{lemma}
\label{lemma:errorcontroldetstoch}
Let $v$ be a solution to \eqref{eq:nagumohintencut} and $u$ a solution to 
\eqref{eq:stochnagumohintencut}. Then, given any $\epsilon>0$ and any fixed finite $T>0$, 
there exists a sufficiently small (non-vanishing) noise with $\|Q\|_{\mathcal{H}}^2 
\leq \tilde{\epsilon}$, such that
\begin{equation}
\sup_{t\in{[0,T]}}\E \left[ \| v(t) - u(t) \|_{L^2(\mathcal{D})}^2\right] \leq \epsilon
\end{equation}
\end{lemma}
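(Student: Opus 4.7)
The plan is to perform an energy estimate on the difference $w(t) := u(t) - v(t)$ and close it via monotonicity, exploiting the smallness of the noise covariance $Q$ through Gronwall's inequality. Since both $u$ and $v$ are started from the same truncated initial datum $v_0$ inherited from Lemma \ref{lemma:errorcontroldet} and satisfy compatible boundary conditions on $\partial \mathcal{D}$, subtracting \eqref{eq:nagumohintencut} from \eqref{eq:stochnagumohintencut} yields the variational SPDE
\begin{equation*}
dw = \bigl[\nu \Delta w + f(u) - f(v)\bigr]\,dt + g(u)\,dW, \qquad w(0)=0.
\end{equation*}

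I would then apply It\^o's formula to $\|w(t)\|^2$ in the Gelfand triple $H_0^1(\mathcal{D}) \subset L^2(\mathcal{D}) \subset H^{-1}(\mathcal{D})$, obtaining
\begin{equation*}
\|w(t)\|^2 = 2\int_0^t \langle \nu \Delta w + f(u) - f(v), w\rangle\,ds + 2\int_0^t (w, g(u)\,dW) + \int_0^t \|g(u(s))\|^2_{\mathcal{H}}\,ds.
\end{equation*}
Taking expectations kills the martingale term. Using $\langle \nu \Delta w, w\rangle = -\nu\|w\|^2_{H_0^1(\mathcal{D})} \leq 0$ and the mean-value argument from \eqref{eq:pre-coercivity} for $\langle f(u)-f(v), w\rangle \leq c_a\|w\|^2$ (essentially the continuous analogue of \eqref{eq:monotonicity} with the $g$-terms peeled off), one gets
\begin{equation*}
\E\|w(t)\|^2 \leq 2c_a\int_0^t \E\|w(s)\|^2\,ds + \int_0^t \E\|g(u(s))\|^2_{\mathcal{H}}\,ds.
\end{equation*}

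The crucial step is to make the smallness of $Q$ explicit in the last integrand. By the representation \eqref{eq:tracenoise} of the $Q$-Wiener process and the linear growth assumption \eqref{eq:sigmagrowth}, the Hilbert--Schmidt norm of the multiplication operator $g(u)$ weighted by $Q^{1/2}$ satisfies $\|g(u)\|^2_{\mathcal{H}} \leq c\, M_Q\,(1+\|u\|^2)$ with $M_Q = {\rm Tr}\, Q \leq \tilde{\epsilon}$. Combined with the a priori bound $\sup_{t\leq T}\E\|u(t)\|^2 \leq C(a,\nu,T,u_0)$, which is the continuous counterpart of Lemma \ref{lemma:boundedL2}, this yields $\int_0^T \E\|g(u(s))\|^2_{\mathcal{H}}\,ds \leq \tilde{\epsilon}\, C(a,\nu,T,u_0)$. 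Since $w(0)=0$, Gronwall's inequality then gives $\sup_{t\in[0,T]}\E\|w(t)\|^2 \leq \tilde{\epsilon}\, C(a,\nu,T,u_0)\, e^{2c_a T}$, and it suffices to take $\tilde{\epsilon}$ small enough to conclude.

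The main obstacle is bookkeeping the dependence of $\|g(u)\|^2_{\mathcal{H}}$ on the trace $M_Q$ cleanly: one must unwind the Hilbert--Schmidt norm of $g(u)$ acting on the $Q$-weighted basis $\{\sqrt{\mu_k}e_k\}$ so that the small parameter $\|Q\|$ genuinely appears as a prefactor, rather than being hidden inside the abstract constant in \eqref{eq:sigmagrowth}. Once this identification is in place, the remaining argument is a routine monotone-operator style Gronwall estimate with deterministic initial error equal to zero.
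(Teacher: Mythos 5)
Your proposal is correct, but it takes a genuinely different route from the paper. The paper works with the \emph{mild} formulation: it writes $u(t)-v(t)$ via the variation-of-constants formula with the heat semigroup $S(t)$, estimates the drift difference using the local Lipschitz bound of Lemma~\ref{lemma:liploc}, applies the It\^o isometry to the stochastic convolution so that $\sigma=\|Q\|_{\mathcal{H}}^2$ appears as a prefactor, and closes with Gronwall. You instead stay in the variational framework of the rest of the paper: It\^o's formula for $\|w(t)\|^2$ in the Gelfand triple, the sign of $\langle \nu\Delta w,w\rangle$, and the one-sided Lipschitz bound $\langle f(u)-f(v),w\rangle\leq c_a\|w\|^2$ from~\eqref{eq:monotonicity}, with the noise smallness entering through the It\^o correction term $\int_0^t\|g(u)\|^2_{\mathcal{H}}\,\txtd s \lesssim \operatorname{Tr}Q\,(1+\sup_t\E\|u\|^2)$. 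Your version buys two things: it is more consistent with the monotone-operator machinery used everywhere else in the paper (compare the discrete energy estimates of Section~\ref{sec:apriori}), and it replaces the \emph{local} Lipschitz constant $K_M$ --- which in the paper's proof forces the final constant to depend on $\|u\|_{H_0^1}$ and $\|v\|_{H_0^1}$ --- by the fixed one-sided constant $c_a=\sup f'$, so the Gronwall factor is explicit. The paper's mild approach, in turn, avoids justifying It\^o's formula for the squared norm in the variational setting and makes the $\sqrt{Q}$-weighting of the noise explicit via $\tilde g(u)=g(u)\sqrt{Q}$. The one point you rightly flag as delicate --- extracting $\operatorname{Tr}Q$ (or $\|Q\|_{\mathcal{H}}$) as a genuine prefactor rather than hiding it in the constant of~\eqref{eq:sigmagrowth} --- is handled in the paper by exactly that factorization through $\sqrt{Q}$ against a cylindrical Wiener process; you should make the analogous computation $\sum_k\mu_k\|g(u)e_k\|^2\leq C\operatorname{Tr}Q\,(1+\|u\|^2)$ explicit (using uniform boundedness of the $e_k$), but this is a routine fix and not a gap.
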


\begin{proof}
We assume that both equations start with the same deterministic initial data $u_0$, therefore 
at time $t=0$, both equations satisfy the same boundary conditions and the error is zero.
At time $t$,
we write the error between the deterministic 
and the stochastic solution via the mild solution expression as
\begin{equation}\label{eq:milddifference}
\begin{aligned}
  u(t) - v(t)   
  =& \; S(t) (u_0 - u_0)  + \;  \int_0^t S(t-s)  \left(f(u(s)) - f(v(s)) \right)~\txtd s  \\
  &+  \; \int_0^t S(t-s) \tilde{g}(u(s)) ~\txtd\tilde{W}(s).  \\
 \end{aligned}
\end{equation}
with $\tilde{W}$ a cylindrical Wiener Process on $L^2(\mathcal{D})$ and 
\begin{equation}
  \tilde{g}(u) \phi = g(u(s)) \sqrt{Q} \phi
\end{equation}
where  $\sqrt{Q}$ is the positive definite square root of the the covariance 
operator $Q$ of the Wiener Process. We use the boundedness of the heat semigroup, 
the local Lipschitz continuity of $f$ from Lemma \ref{lemma:liploc}, It\^{o}'s Isometry 
and Gronwall's equality to get 
\begin{equation}
\begin{aligned}
 \E \left[ \| u(t) - v(t) \|_{L^2(\mathcal{D})}^2\right] 
   =&  \; \E \left[ \int_{\mathcal{D}} \left( \int_0^t S(t-s)  \left(f(u(s)) - f(v(s)) 
	\right) ~\txtd s \right)^2 ~\txtd x \right]  \\
  &+  \E \left[ \int_{\mathcal{D}}\left( \int_0^t S(t-s) g(u(s))
	~\txtd W(s) \right)^2 ~\txtd x \right] \\
   \leq&  \; c_a^2 K_M \;  \| S(t-s)\|_{\infty}  \E \left[ 
	\int_{\mathcal{D}}\left( \int_0^t  | u(s) - v(s)| ~\txtd s  \right)^2 
	~\txtd x \right]  \\
  \;  &+ \;  \sigma \,  \cdot  \; \E \left[ \int_{\mathcal{D}}\int_0^t S(t-s)^2  g(u(s))^2 
	 ~\txtd s ~\txtd x \right] \\
  \leq& \; \sigma \,  \cdot  \; \exp \left( \int_0^t c_a K_M \| S(t-s)\|_{\infty}
	~\txtd s \right)  \E \left[ \int_{\mathcal{D}}\int_0^t S(t-s)^2  g(u(s))^2  
	~\txtd s ~\txtd x \right]\\
  =& \sigma \cdot c \;  \E \left[ \int_{\mathcal{D}}\int_0^t S(t-s)^2  g(u(s))^2 
	~\txtd s ~\txtd x \right]
 \end{aligned}
\end{equation}
where we used the notation $\sigma := \| Q \|_{\mathcal{H}}^2$
and $c = c \left(t,c_a, \|u\|_{H_0^1(\mathcal{D})}, \|v\|_{H_0^1(\mathcal{D})},  
\| S(t-s)\|_{\infty} \right)$.
Under the assumption $g(u(s)) \in L^2(\Omega, L^2([0,T] \times D)) $, we can conclude 
that for finite times $t \leq T$, 
\begin{equation}
\label{eq:finitetimesmallnoise}
\begin{aligned}
 \E \left[ \| u(t) - v(t) \|_{L^2(\mathcal{D})}^2\right] 
      \leq& \; \| Q \|_{\mathcal{H}}^2 \cdot c\left(T, b, 
			c_a, \|u\|_{H_0^1(\mathcal{D})}, \|v\|_{H_0^1(\mathcal{D})},  
			\| S(t-s)\|_{\infty} \right)) 
 \end{aligned}
\end{equation}
The choice of $ \| Q \|_{\mathcal{H}}^2 \leq \frac{\epsilon}{c} $ with the 
$c$ from \eqref{eq:finitetimesmallnoise} finally gives 
$ \E \left[\| u(T) - v(T) \|_{L^2(\mathcal{D})}^2 \right] \leq \epsilon$, finishing the proof.
\end{proof}

\subsection{Proof of the main theorem}
\label{sec:maintheorem}


We can now finally prove that the  solution of the stochastic 
LDE \eqref{eq:discstochnagumo} is likely to be close to the 
traveling wave. The below theorem, is the same as Theorem \ref{thm:intro} from the introduction, 
with the additional specifications on what sufficiently small noise means. 

\begin{theorem}
\label{theo:main}
Let $V^{\textnormal{TW}}=V^{\textnormal{TW}}(t,x)$ be a travelling front solution 
to~\eqref{eq:nagumo}, $u^h_0$ be deterministic lattice initial data and $u^h$ a solution 
to ~\eqref{eq:discstochnagumo} on the interval $\mathcal{D}:=[-L,L]$ to the initial data 
$u^h_0 \equiv u^h(0)$. Suppose $L >0$ large enough, $\delta>0$, $T>0$, and $\tilde{\varepsilon}>0$ 
are given, and the initial data $u^h(0)$  satisfies
\begin{equation}
\| u^h(0) - V^{{\textnormal{TW}}}(0,\cdot)\|^2_{L^2(\mathbb{R})} < \varepsilon .
\end{equation}
Then there exists $\varepsilon>0$ and $c\in\mathbb{R}$ such that,
for  sufficiently small $h>0$, 
and $\|Q\|_{\mathcal{H}}^2 \leq \tilde{\epsilon}$, 
the solution $u^h$ to~\eqref{eq:discstochnagumo} satisfies
\begin{equation}
 \Proba \left[ \sup_{t \in [0,T]} \| u^h(t) - V^{{\textnormal{TW}}}(\cdot-ct)  
\|_{L^2(\mathbb{R})} > \delta \right] \; \leq \;  \tilde{\varepsilon}.
\end{equation}
\end{theorem}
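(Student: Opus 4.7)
The plan is to combine three pieces: the mean-square error control from Proposition~\ref{prop:errorcontrol}, the classical nonlinear $L^2$-stability of the deterministic Nagumo front, and a Chebyshev-type conversion of a moment bound into a probability bound, after first upgrading the pointwise-in-$t$ moment estimates to a supremum-in-$t$ estimate.

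Let $V$ be the solution of the whole-line PDE \eqref{eq:nagumo} whose initial datum is the piecewise-linear interpolation of $u^h_0$ (extended by the boundary values on $\mathbb{R}\setminus\mathcal{D}$ as in Section~\ref{ssec:cutoff}). The triangle inequality gives
\begin{equation*}
\sup_{t\in[0,T]}\| u^h(t) - V^{{\textnormal{TW}}}(\cdot-ct) \|_{L^2(\mathbb{R})}
\;\leq\; \sup_{t\in[0,T]}\| u^h(t) - V(t) \|_{L^2(\mathbb{R})}
\;+\; \sup_{t\in[0,T]}\| V(t) - V^{{\textnormal{TW}}}(\cdot-ct) \|_{L^2(\mathbb{R})}.
\end{equation*}
For the deterministic summand I would invoke the classical $L^2$-nonlinear stability of the Nagumo front (see the references in Section~\ref{sec:intro}): since $\|V(0)-V^{{\textnormal{TW}}}(0,\cdot)\|_{L^2(\mathbb{R})}^2 < \varepsilon$ by hypothesis, there exist $K>0$ and an asymptotic phase $\phi_0$ such that $\sup_{t\geq 0}\|V(t)-V^{{\textnormal{TW}}}(\cdot-ct+\phi_0)\|_{L^2(\mathbb{R})}\leq K\sqrt{\varepsilon}$, and I absorb $\phi_0$ into the freedom in $c$ (or tolerate it within $\delta$). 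The stochastic/discretization summand is controlled in mean square at each $t$ by Proposition~\ref{prop:errorcontrol}, after choosing $h$ small, $L$ large, and $\|Q\|_{\mathcal{H}}^2$ small.

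To obtain a probability bound on the supremum, I would strengthen Proposition~\ref{prop:errorcontrol} to an estimate on $\E[\sup_{t\in[0,T]}\|u^h(t)-V(t)\|^2_{L^2(\mathbb{R})}]$. The two deterministic error sources of Section~\ref{ssec:cutoff} and the $f$-term in Lemma~\ref{lemma:errorcontroldetstoch} are already bounded uniformly in $t\in[0,T]$ via Gronwall, so they pass to the supremum at the cost of a $T$-dependent constant. The stochastic integral in the mild formulation of $u^h-V$ is a continuous martingale whose supremum I control by the Burkholder-Davis-Gundy inequality, exactly as in the proof of Lemma~\ref{lemma:boundedL2}. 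Combining the pieces and applying Chebyshev yields
\begin{equation*}
\Proba\!\left[\sup_{t\in[0,T]}\| u^h(t) - V^{{\textnormal{TW}}}(\cdot-ct) \|_{L^2(\mathbb{R})} > \delta\right]
\;\leq\;\frac{1}{\delta^{2}}\,\E\!\left[\sup_{t\in[0,T]}\| u^h(t) - V^{{\textnormal{TW}}}(\cdot-ct) \|^{2}_{L^2(\mathbb{R})}\right],
\end{equation*}
and the right-hand side is made smaller than $\tilde{\varepsilon}$ by shrinking $\varepsilon$, $h$, $L^{-1}$ and $\|Q\|_{\mathcal{H}}^{2}$.

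The main obstacle I anticipate is precisely this supremum-in-time upgrade: all of the intermediate estimates in Sections~\ref{ssec:cutoff}--\ref{ssec:smallnoise} are stated pointwise in $t$, and converting them uniformly requires care to track constants in the Gronwall and BDG steps so that they remain independent of $h$ (in particular of the stencil range $R$, which may diverge with $N$) and depend only polynomially on $T$, so that all error contributions can be absorbed simultaneously into $\delta^{2}\tilde{\varepsilon}$ by tuning $h$, $L^{-1}$, and $\|Q\|_{\mathcal{H}}^{2}$. A secondary but routine point is reconciling Proposition~\ref{prop:errorcontrol} (which was stated with initial datum exactly equal to the travelling wave) with the present initial datum $u^h_0$ that is only close to $V^{{\textnormal{TW}}}(0,\cdot)$; this is handled automatically by choosing $V$ with initial datum $u^h_0$ and using the nonlinear stability result on the deterministic side.
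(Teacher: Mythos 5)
Your decomposition is exactly the paper's: both proofs split $\| u^h(t)-V^{\textnormal{TW}}(\cdot-ct)\|_{L^2(\mathbb{R})}$ via the triangle inequality through the whole-line deterministic solution $V$, control $\|u^h-V\|$ by Proposition~\ref{prop:errorcontrol} (small $h$, large $L$, small $\|Q\|_{\mathcal{H}}^2$), and control $\|V-V^{\textnormal{TW}}\|$ by the classical deterministic $L^2$-stability of the Nagumo front. Where you genuinely diverge is the conversion of moment bounds into a bound on the probability of the time-supremum. The paper declares the process $e_t:=\|u^h(t)-V^{\textnormal{TW}}(\cdot-ct)\|_{L^2(\mathcal{D})}$ to be a continuous martingale and applies Doob's inequality, which reduces everything to the single-time expectation $\E[\|u^h(T)-V^{\textnormal{TW}}(\cdot-cT)\|^2]$ and lets the pointwise-in-$t$ estimates of Sections~\ref{ssec:cutoff}--\ref{ssec:smallnoise} be used as stated. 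You instead propose to upgrade Proposition~\ref{prop:errorcontrol} to a bound on $\E[\sup_{t\le T}\|u^h(t)-V(t)\|^2]$ via Burkholder-Davis-Gundy (as in Lemma~\ref{lemma:boundedL2}) and then apply Chebyshev. Your route is more laborious --- you correctly identify the sup-in-time upgrade as the main technical burden --- but it is also the more defensible one: $e_t$ is the norm of a process with a nontrivial drift and is not in general a martingale, so the paper's appeal to Doob is the weakest step of its argument, and your BDG-plus-Chebyshev replacement is the standard way to make it airtight. You also flag and resolve a point the paper passes over silently, namely that Proposition~\ref{prop:errorcontrol} is stated for initial data exactly equal to the wave while the theorem only assumes $\varepsilon$-closeness; routing the discrepancy through the nonlinear stability of the front applied to $V$ started at $u^h_0$ is the right fix.
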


\begin{proof}
As we have outlined in the beginning of this section, there exists 
a solution to equation~\eqref{eq:discstochnagumo}, which is an adapted 
process with a continuous version. The question is whether this (discrete-in-space) 
solution is likely to be close to the deterministic travelling wave. 
We will show that this is indeed true by comparing $u^h$ to solutions to 
several intermediate problems. 

To set up notation, let us now denote by $u^h(t)$ be the piecewise linear 
extension of the solution to the stochastic LDE \eqref{eq:discstochnagumo} 
to the whole interval $\mathcal{D}$. Moreover we extend the solution as 
$u^h(t) \equiv 1$ and  $u^h(t) \equiv 0$ from the two boundary points 
of $\mathcal{D}$ to $\pm \infty$. Consider the adapted stochastic process 
$e_t := \| u^h(t) - V^{{\textnormal{TW}}}(\cdot-ct)  \|_{L^2(\mathcal{D})} $. 
By the properties of $u^h$, $e_t$ defines a martingale whose trajectories 
are continuous almost surely. We can then estimate by Doob's inequality
\begin{equation}
\label{eq:stabilitystart}
\begin{aligned}
 \Proba \left[ \sup_{t \in [0,T]} \| u^h(t) - V^{{\textnormal{TW}}}(\cdot-ct)
\|_{L^2(\mathbb{R})} > \delta \right]  
 &\leq \frac{1}{\delta^2} \E \left[ \| u^h(T) - V^{TW}(\cdot-cT) \|_{L^2(\mathbb{R})}^2. 
\right]
\end{aligned}
\end{equation}
We split the error between the stochastic LDE and the travelling wave front 
into three parts, using the linearity of the expectation, i.e., we use
\benn
\|u^h-V^{\textnormal{TW}}\|_{L^2}=\|u^h-V+V-V^{\textnormal{TW}}\|_{L^2}
\leq \|u^h-V\|_{L^2}+\|V-V^{\textnormal{TW}}\|_{L^2},
\eenn
and then take expectations. By Proposition~\ref{prop:errorcontrol}, the first 
term goes to zero as $h \to 0$, while the second term is small by the standard
deterministic local asymptotic $L^2$-stability of the travelling wave front in 
the deterministic setting, i.e., the front is known to be deterministically 
stable for the Nagumo equation~\cite{Chen1,KuehnBook1,Sandstede1}.
\end{proof}

\subsection*{Acknowledgment}
 CK~was supported by a Lichtenberg Professorship of the VolkswagenStiftung as well as by the Deutsche 
Forschungsgemeinschaft (DFG) via the CRC/TR109 ``Discretization in Geometry and Dynamics''.


\end{document}